\newcommand\be{\begin{equation}}
\newcommand\ee{\end{equation}}
\newcommand\bea{\begin{eqnarray}}
\newcommand\eea{\end{eqnarray}}
\newcommand\bi{\begin{itemize}}
\newcommand\ei{\end{itemize}}
\newcommand\ben{\begin{enumerate}}
\newcommand\een{\end{enumerate}}
\newcommand\bc{\begin{center}}
\newcommand\ec{\end{center}}
\newcommand\ba{\begin{array}}
\newcommand\ea{\end{array}}
\newcommand{\bd}{{\bf d}}
\newcommand{\C}{\ensuremath{\mathbb{C}}}
\newcommand{\sS}{{\mathcal S}}
\newcommand{\sA}{{\mathcal A}}
\newcommand{\sD}{{\mathcal D}}
\newcommand{\bu}{{\bf u}}
\newcommand{\bmu}{{\bf \mu}}
\newcommand{\sT}{{T}}
\newcommand{\CC}{{\mathbb C}}
\newcommand{\ZZ}{{\mathbb Z}}
\newtheorem{thm}{Theorem}[section]
\newtheorem{cor}[thm]{Corollary}
\newtheorem{lem}[thm]{Lemma}
\newtheorem{prop}[thm]{Proposition}
\newtheorem{exa}[thm]{Example}
\newtheorem{defi}[thm]{Definition}
\theoremstyle{definition}
\newtheorem{rem}[thm]{Remark}
\numberwithin{equation}{section}
\begin{document}

\title{ Notes on Cardinal's  
 Matrices}

\author{Jeffrey C.  Lagarias}
\address{Dept. of Mathematics\\
University of Michigan \\
Ann Arbor, MI 48109-1043\\
}
\email{lagarias@umich.edu}

\author{David Montague}
\address{Dept. of Mathematics\\
Stanford University\\
Stanford, CA 94305
}
\email{davmont@stanford.edu}

\subjclass{Primary 11R09; Secondary 11R32, 12E20, 12E25}

\thanks{The first author's work  was partially supported by NSF grants DMS-0801029
DMS-1101373 and  DMS-1401224. The second author's work was supported
by the NSF as part of an REU project.}

\date{November 21, 2015, v24}

\maketitle

\begin{abstract}These notes are motivated by the work of Jean-Paul Cardinal  on
certain symmetric matrices related to the Mertens function.  He showed
that certain norm bounds
on his matrices implied the Riemann hypothesis. 
Using a different matrix norm we show an equivalence of
the Riemann hypothesis to suitable norm bounds
on his Mertens matrices in the new norm. 
We also specify a deformed
version of his Mertens matrices that unconditionally satisfy a norm bound  of 
the same strength as this Riemann hypothesis bound.  
\end{abstract}
\setcounter{equation}{0}


%
%
\section{Introduction}

In 2010 Jean-Paul Cardinal \cite{C10E} introduced for each $n \ge 1$  two  symmetric integer matrices
 $\mathcal{U}_n$ and $\mathcal{M}_n$  constructed using
a set of ``approximate divisors" of $n$, defined below. Each of these matrices can be obtained from the other. 
 Both matrices are $s \times s$ matrices, where $s=s(n)$
is about $2 \sqrt{n}$. Here  $s(n)$ counts the number of  distinct values taken by $\lfloor \frac{n}{k} \rfloor$ 
when $k$ runs from $1$ to $n$. These values comprise the approximate divisors, and we  label them
$k_1 =1,k_2, ..., k_s=n$ in increasing order.

We start with defining Cardinal's basic matrix  $\mathcal{U}_n$ attached to an integer $n$.
The $(i, j)$-th entry of $\mathcal{U}_n$ is  $\lfloor \frac{n}{k_i k_j} \rfloor$. One can show that
such matrices take value  $1$ on the anti-diagonal, and are $0$
at all entries below the antidiagonal, consequently it has determinant $\pm1$.
(The antidiagonal entries are the $(i, j)$-entries  with $i+j= s+1$, numbering rows and
columns from $1$ to $s$.) All entries above the antidiagonal are positive.
It follows that this matrix has $s^2 \approx 4n$ entries and
about half of them are nonzero.
The matrices $\mathcal{U}_n$ encode information mixing together the additive
and multiplicative structures of the integers in an interesting way, using also the floor function.

Cardinal  defines a second matrix,  $\mathcal{M}_n$,   by the recipe
$$\mathcal{M}_n := \sT (\mathcal{U}_n)^{-1}  \sT,$$
where $\sT$ is a square matrix of the same size,   having $1$'s on and above the anti-diagonal, and 
value $0$ at all entries strictly below the antidiagonal.
Clearly $\det(\mathcal{M}_n) = \det(\mathcal{U}_n)$.
One can prove that $\mathcal{M}_n$ has a similar pattern of entries to $\mathcal{U}_n$, 
in having all values $0$ strictly below the antidiagonal and all values $1$ on the antidiagonal, but it
may  now have some negative entries above the antidiagonal.  One of Cardinal's main results is that
the entries above the antidiagonal its $(i, j)$-th entry is
$M( \lfloor \frac{n}{k_i k_j}\rfloor)$, where $M(x) = \sum_{j \le x} \mu(j)$
is the Mertens function of prime number theory. The entries below the antidiagonal
are also Mertens function values, since in  that case $M(\lfloor \frac{n}{k_i k_j}\rfloor)= M(0)=0$.
We might therefore name this matrix a {\em Mertens matrix}.

Cardinal \cite[Theorem 24]{C10E} proves that
an upper bound $O( n^{\frac{1}{2} + \epsilon})$
on the  growth   of the norms of the matrices  $\mathcal{M}_n$ measured  
in the $\ell_2$ operator norm implies the Riemann hypothesis holds. 
He gives numerical plots of the norms of  $\mathcal{M}_n$ for small $n$ supporting this upper bound.

 Cardinal's results  are  structural. He relates these matrices
 to finite-dimensional
quotient  algebras of the  algebra of Dirichlet series, and proves that  his quotient algebras
are commutative and associative matrix algebras, which are lower triangular.
His  Proposition \ref{pr22} about floor functions  is important in establishing that
certain linearly transformed versions of the matrices in the algebra give  
rise to symmetric matrices, including $\mathcal{U}_n$ and $\mathcal{M}_n$.

In the 2008 French preprint version \cite{C10F} of this paper Cardinal 
 introduced additionally  a deformed 
version $\widetilde{\mathcal{U}}_n^{+}$ of his matrix $\mathcal{U}_n$,
whose entries when rounded down by the floor function yield $\mathcal{U}_n$.
He then proposed to define by the same recipe a deformed version of 
the Mertens matrix $\mathcal{M}_n$ as
$$
\widetilde{\mathcal{M}}_n^{+} := T_{s} (\tilde{\mathcal{U}_n^{+}})^{-1} T_{s}.
$$
He presented an argument giving a matrix norm for the perturbed matrix, asserting that it satisfied the Riemann hypothesis
bound.  Unfortunately his   argument failed  because his 
 definition
of a deformed matrix $\widetilde{\mathcal{U}}_n^{+}$ had   rank one,
so was not invertible. Perhaps because of this
no discussion of  deformed matrices  appeared in  the author's final English version of his paper.
Cardinal's argument, though flawed, has serious content, and we obtain below a 
modified result where it works.

\subsection{Results}\label{sec11}

Much  of this note consists of an exposition of Cardinal's results in a 
very slightly different notation, presenting similar numerical examples.
The main new contributions are the following.
\begin{enumerate}
\item
We use a different family of matrix norms, the Frobenius norms. 
We  prove that the Riemann hypothesis  is equivalent to a suitable growth bound
on the Frobenius matrix norms $||\mathcal{M}_n||_F$ (Theorem \ref{th43}).
The choice of the family of norms possibly matters to obtain an equivalence
because the dimensionality of the matrices goes
to infinity as $n \to \infty$.  

\item
 We introduce in  Section \ref{sec62} a natural modified definition of 
deformed matrix $\widetilde{\mathcal{U}}_n$, that applies the floor
function to ``half" of Cardinal's deformed  matrix,  which is  entrywise close to $\mathcal{U}_n$.
We set $\widetilde{\mathcal{M}}_n :=  T(\widetilde{\mathcal{U}}_n)^{-1} T$ and
prove for $\widetilde{\mathcal{M}}_n$
unconditionally a Frobenius  norm upper bound of 
the same strength as the Riemann hypothesis bound above (Corollary \ref{cor512}).
This proof  follows the ideas given in Cardinal's
2008 French preprint \cite{C10F}.
\end{enumerate}


\subsection{Related work} \label{sec12}
There is  earlier work on 
integer matrices having a determinant related to the Mertens function 
(\cite{BFP88}, \cite{RB89}, \cite{BJ92}, \cite{Va93}, \cite{Va96}, \cite{Hu97}).
This work concerns {\em Redheffer's matrix}, 
named after Redheffer \cite{Red77}. It is an interesting question to
determine if there are relations between Redheffer's matrix  and Cardinal's matrix.
Recent work of Cardon \cite{Car10} makes a connection of the 
Redheffer matrix with the  quantities $\lfloor \frac{n}{k}\rfloor$ appearing in Cardinal's
matrix.\\

\subsection{Matrix norms}\label{sec13}
In this paper we bound matrices using the Frobenius matrix norm
$$
||M||_{F}^2 := \sum_{i=1}^n \sum_{j=1}^n |M_{ij}|^2.
$$

In Cardinal's paper \cite{C10E} the matrix norm $||M||$ 
on an $n \times n$ complex matrix is 
taken to be the {\em $l_2$ operator norm}  
$$
||M|| := \max_{ v \in \C^n, v \ne 0} \frac{ ||Mv||^2}{||v||^2},
$$
where $||v||^2 = \sum_{i=1}^n  |v_i|^2$. 
For symmetric real matrices $M$ this norm bound coincides with the
spectral radius of the matrix.

%
%
\section{Cardinal's Algebra of  Approximate Divisors}\label{sec2}

Cardinal's paper is concerned with  matrices encoding properties
of ``approximate divisors" of $n$. The surprising property they have
is of forming a commutative subalgebra $\sA_n$ of lower triangular matrices,
of rank $s= s(n)$.

These matrices encode information mixing together the additive
and multiplicative structures of the integers in an interesting way.\smallskip
 
%
%
\subsection{Approximate Divisors}\label{sec21}

First, Cardinal introduces  ``approximate divisors" of $n$ (our terminology). The number of
such  divisors is on the order of twice the square root of $n$.

\begin{defi} \label{de21}
Let $\mathcal{S}_n$ 
be the set of distinct integers
of the form $\{\lfloor \frac{n}{k} \rfloor: \, 1 \le k \le n\}$.
Let
$$ s = s(n) := \#(\mathcal{S}_n),$$
 so that $s(n)= 2\lfloor \sqrt{n}\rfloor,$
or $\, 2\lfloor \sqrt{n}\rfloor -1$.
\end{defi}

The set $\mathcal{S}_n=\mathcal{S}_n^{-} \bigcup \mathcal{S}_n^{+}$
 consists of all integers in 
$$
\mathcal{S}_n^{-} :=\{ j:~1 \le j \le \lfloor \sqrt{n}\rfloor\},
$$
together with the complementary set 
$$
\mathcal{S}_n^{+} :=
\{ \lfloor \frac{n}{j} \rfloor:~1 \le j \le \lfloor \sqrt{n} \rfloor \}.
$$
These sets are disjoint if  $m(m+1) \le n < (m+1)^2$ and they 
have exactly one element in common, namely $m= \lfloor \sqrt{n} \rfloor$,
if $m^2 \le n < m(m+1)$. He shows \cite[Prop. 4]{C10E}:

\begin{lem}\label{le22} {\em (Cardinal)}
Number the elements of $\mathcal{S}$ in increasing order as $k_i$, $1 \le i \le s$,
so $k_1=1, k_s=n$. 
The map $~~\widehat{}: \sS \to \sS$ defined by 
$$
\widehat{k_i} :=  \lfloor \frac{n}{k_i} \rfloor =k_{s+1-i},
 $$
is an involution exchanging $\sS_n^{+}$ and $\sS_n^{-}.$ 
\end{lem}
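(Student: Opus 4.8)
The plan is to reduce the whole statement to one elementary identity about the floor function. Write $m := \lfloor\sqrt{n}\rfloor$. The central claim is that for every integer $k$ with $1 \le k \le m$ one has $\lfloor n / \lfloor n/k\rfloor \rfloor = k$. To see this, set $d := \lfloor n/k\rfloor$, so that $kd \le n < kd + k$ by definition of the floor. The one observation that makes it work is $k \le d$: indeed $d = \lfloor n/k\rfloor \ge \lfloor n/m\rfloor \ge m \ge k$, the middle inequality because $m^2 \le n$. Hence $n < kd + k \le kd + d = (k+1)d$, and combined with $kd \le n$ this is exactly the assertion $\lfloor n/d\rfloor = k$. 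Note the same computation shows $d \ge m$, so $\widehat{k} = \lfloor n/k\rfloor$ lands in $\mathcal{S}_n^{+}$ whenever $k \le m$.

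Granting this identity, I would next check that $\widehat{\ }$ is a well-defined involution of $\mathcal{S}_n$ that interchanges $\mathcal{S}_n^{-}$ and $\mathcal{S}_n^{+}$. Using the decomposition $\mathcal{S}_n = \mathcal{S}_n^{-} \cup \mathcal{S}_n^{+}$ already recorded above, I treat the two pieces. If $j \in \mathcal{S}_n^{-}$, i.e.\ $1 \le j \le m$, the identity gives $\widehat{j} = \lfloor n/j\rfloor \in \mathcal{S}_n^{+}$ and $\widehat{\widehat{j}} = \lfloor n / \lfloor n/j\rfloor \rfloor = j$. If $k \in \mathcal{S}_n^{+}$, write $k = \lfloor n/j\rfloor$ with $1 \le j \le m$; then the identity gives $\widehat{k} = \lfloor n/k\rfloor = j \in \mathcal{S}_n^{-}$, and one further application returns $\widehat{\widehat{k}} = \lfloor n/j\rfloor = k$. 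Thus $\widehat{\ }$ maps $\mathcal{S}_n$ into itself, equals its own inverse, is therefore a bijection of $\mathcal{S}_n$, and swaps the two halves.

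It remains to match $\widehat{\ }$ with the increasing enumeration $k_1 < k_2 < \cdots < k_s$. The function $x \mapsto \lfloor n/x\rfloor$ is non-increasing in $x$, and on $\mathcal{S}_n$ it is in fact \emph{strictly} decreasing: if $k < k'$ lay in $\mathcal{S}_n$ with $\lfloor n/k\rfloor = \lfloor n/k'\rfloor = d$, then applying the involution would force $k = \lfloor n/d\rfloor = k'$, a contradiction. A strictly decreasing self-bijection of the totally ordered $s$-element set $\{k_1 < \cdots < k_s\}$ is forced to be $k_i \mapsto k_{s+1-i}$; since $\widehat{\ }$ restricted to $\mathcal{S}_n$ is precisely this map, we obtain $\widehat{k_i} = k_{s+1-i}$, consistent with $\widehat{k_1} = \lfloor n/1\rfloor = n = k_s$.

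The only step that needs genuine care is the floor identity in the first paragraph, and inside it the inequality $k \le \lfloor n/k\rfloor$ for $k \le m$ — this is what pins $n$ inside the half-open interval $[kd,(k+1)d)$; everything after that is bookkeeping. For completeness one should also justify the decomposition $\mathcal{S}_n = \mathcal{S}_n^{-}\cup\mathcal{S}_n^{+}$ used above: every $j \le m$ is an approximate divisor since $j = \lfloor n / \lfloor n/j\rfloor \rfloor$ by the identity, and every approximate divisor $\lfloor n/k\rfloor$ with $k \ge m+1$ satisfies $\lfloor n/k\rfloor \le m$ because $(m+1)^2 > n$; so no approximate divisor escapes the two listed families.
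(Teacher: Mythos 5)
Your proof is correct. The paper itself states this lemma without proof, deferring to Cardinal, and your route is the natural (and essentially Cardinal's) one: everything hinges on the reciprocity identity $\lfloor n/\lfloor n/k\rfloor\rfloor = k$ for $1\le k\le \lfloor\sqrt{n}\rfloor$, which you prove cleanly via $k\le d=\lfloor n/k\rfloor$ and the sandwich $kd\le n<(k+1)d$, and the remaining steps (well-definedness, self-inverse, strict decrease forcing $k_i\mapsto k_{s+1-i}$) are handled correctly, including the boundary case where $\mathcal{S}_n^{-}$ and $\mathcal{S}_n^{+}$ share the element $\lfloor\sqrt{n}\rfloor$, which is simply a fixed point. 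One tiny redundancy: the membership $\widehat{k}\in\mathcal{S}_n^{+}$ for $k\le\lfloor\sqrt{n}\rfloor$ is immediate from the definition of $\mathcal{S}_n^{+}$ and does not need the observation $d\ge\lfloor\sqrt{n}\rfloor$.
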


\begin{exa}\label{ex23a}
{\rm
For $n=16$ the approximate divisors are $\{1, 2, 3, 4, 5, 8, 16\}$.
The involution acts $\widehat{1}=16, ~\widehat{16}=1$, and $\widehat{2}=8, ~\widehat{8}=2$,
and $\widehat{3}=5, ~\widehat{5}=3$ and $\widehat{4}=4$.
}
\end{exa}

%
%
\subsection{Cardinal's Multiplication Algebra $\sA_n$}\label{sec22}

Cardinal associates to an integer $n$  a commutative, associative
algebra $\sA_n$ of lower triangular matrices of dimension $s$ inside the $s \times s$ matrices,
for which he gives generators.
The generators give the effect of ``multiplication by a fixed divisor" on the this algebra.

\begin{thm}\label{th21} {{\em (Cardinal)}}
The algebra $\sA_n$ has rank $s$ and is spanned  by the 
$s \times s$ matrices $\}\rho_n(k); k \in \sS_n$, 
 where
$k$ runs over the set of approximate divisors of $n$, which are all integers of
the form $\lfloor \frac{n}{k} \rfloor$. Each  matrix $\rho_n(k)$ is a lower
triangular matrix giving the effect of multiplication by $k$ on a basis of 
approximate divisors, arranged in increasing order. The matrix
$\rho(1)$ is the identity matrix, and all other $\rho_n(k)$ are  lower triangular nilpotent.
This algebra is commutative.
\end{thm}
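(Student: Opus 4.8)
\medskip
\noindent\textbf{Proof proposal.} The plan is to realize $\sA_n$ as the linear span of the ``multiplication'' operators of a finite commutative monoid obtained from $(\Z_{\ge 1},\cdot)$ by the floor‑quotient map, and then to read the four assertions off that description. First I would make the generators explicit: fix the basis $e_1,\dots,e_s$ of $\C^s$ with $e_i$ attached to the approximate divisor $k_i$, where $k_1=1<k_2<\cdots<k_s=n$, and for $k\in\sS_n$ set
$$
\rho_n(k)\, e_i \ :=\ \begin{cases} e_j, & \text{where } k_j=\lfloor n/\lfloor n/(k_i k)\rfloor\rfloor, \text{ if } k_ik\le n,\\ 0, & \text{if } k_ik>n. \end{cases}
$$
The first point to check is that this is well defined: when $k_ik\le n$ the integer $v:=\lfloor n/(k_ik)\rfloor$ lies in $\{1,\dots,n\}$, so $\lfloor n/v\rfloor\in\sS_n$; the floor identity $\lfloor\lfloor n/a\rfloor/b\rfloor=\lfloor n/(ab)\rfloor$ of Proposition \ref{pr22} both justifies this and shows that $\lfloor n/(k_ik)\rfloor$ depends only on the value $k_i$ and not on a chosen preimage under $a\mapsto\lfloor n/a\rfloor$. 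Thus each $\rho_n(k)$ is a matrix with entries in $\{0,1\}$ carrying every standard basis vector to a standard basis vector or to $0$, and $\rho_n(1)$ is the identity matrix, since for $k_i\in\sS_n$ the involution of Lemma \ref{le22} gives $\lfloor n/\lfloor n/k_i\rfloor\rfloor=k_i$.

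Next I would record the underlying monoid law. Put $d\star d':=\lfloor n/\lfloor n/(dd')\rfloor\rfloor$ when $dd'\le n$ (and leave $d\star d'$ undefined, recorded by a zero entry, when $dd'>n$); thus $\rho_n(k)$ is the operator of $\star$‑multiplication by $k$. Transported through the involution $\widehat{\ }$ of Lemma \ref{le22}, the law $\star$ becomes exactly the operation induced on $\{\lfloor n/a\rfloor:a\ge1\}$ by ordinary integer multiplication $a\mapsto ab$; hence $\star$ is commutative and associative, with identity $1$ and with the undefined value annihilating. Consequently $\rho_n(k)\rho_n(l)$ is $\star$‑multiplication by $k\star l$: it equals $\rho_n(k\star l)$ if $kl\le n$ and the zero matrix if $kl>n$, and in either case lies in the linear span of $\{\rho_n(k):k\in\sS_n\}$. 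So that span is an associative subalgebra of the $s\times s$ matrices, namely $\sA_n$, and it is commutative because $\star$ is.

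It remains to obtain triangularity, nilpotence, and the rank. Since $d'\ge1$ forces $\lfloor n/(dd')\rfloor\le\lfloor n/d\rfloor$, applying the order‑reversing involution $\widehat{\ }$ (which sends $k_i$ to $k_{s+1-i}$) to this pair of elements of $\sS_n$ gives $d\star d'\ge d$; as the basis is listed in increasing order, every $\rho_n(k)$ has all of its nonzero entries on or below the main diagonal, i.e.\ is lower triangular. If $k\ne1$, then $k\ge2$, so $\lfloor n/(dd')\rfloor\le\lfloor n/(2d)\rfloor<\lfloor n/d\rfloor$ strictly, whence $d\star k>d$ strictly whenever defined; thus $\rho_n(k)$ is strictly lower triangular for $k\ne1$, and in particular nilpotent with $\rho_n(k)^s=0$. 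Finally, taking $k_i=1$ shows $\rho_n(k)\,e_1=e_j$ with $k_j=k$, so the matrices $\rho_n(k_1),\dots,\rho_n(k_s)$ have first columns $e_1,\dots,e_s$ and are therefore linearly independent; combined with the closure just proved, this gives that $\sA_n$ has rank exactly $s$.

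I expect the one genuinely delicate step to be the first: pinning down the ``multiplication by $k$'' rule so that it is simultaneously well defined on $\sS_n$, commutative, associative, and normalized so that $\rho_n(1)=I$. That is exactly where Cardinal's floor‑function Proposition \ref{pr22}, together with the involution of Lemma \ref{le22}, does the real work; once the law $\star$ is available, lower triangularity, nilpotence, commutativity, and the rank count are all formal consequences.
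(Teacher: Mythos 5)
Your proof is correct, and it fills in an argument that the paper itself does not supply: Theorem \ref{th21} is stated there as a result quoted from Cardinal, with only the remark that the commutativity is a consequence of the floor-function identities of Proposition \ref{pr22}. Your write-up is exactly the proof that remark points toward: you make the "multiplication by $k$" rule precise as $e_i \mapsto e_j$ with $k_j = \lfloor n/\lfloor n/(k_i k)\rfloor\rfloor$ (which reproduces the paper's sample matrices such as $\rho_{16}(2)$, e.g.\ the column of $3$ mapping to the row of $8$), use Proposition \ref{pr22} together with $\lfloor n/\lfloor n/\lfloor n/a\rfloor\rfloor\rfloor=\lfloor n/a\rfloor$ to get well-definedness and the closure/commutativity of the induced law $\star$, and use the order-reversing involution of Lemma \ref{le22} for the (strict) lower triangularity and nilpotence; the first-column observation $\rho_n(k)e_1=e_j$ with $k_j=k$ gives linear independence and hence rank $s$. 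One small point worth making explicit when you assert $\rho_n(k)\rho_n(l)=\rho_n(k\star l)$: the annihilation conditions agree because $\lfloor n/(k_i\,k\,l)\rfloor = \lfloor \lfloor n/\lfloor n/(k_i l)\rfloor\rfloor^{\,\widehat{}}/k\rfloor$-type computations show $k_i k l\le n$ if and only if the intermediate product $k_j k\le n$ (and if and only if $k_i(k\star l)\le n$); this follows from the same identities you already invoke, so it is a matter of stating it rather than a gap.
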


The commutativity property is a consequence of the  identities in 
Proposition \ref{pr22}.

\begin{exa}\label{ex25a}
{\rm
For $n=16$ the 
multiplication by k matrices ${\rho (k)}$ for $k=2, 3, 4,5$
are given below
(omitted  entries are $0$). All these matrices are nilpotent.
\[ \rho_{16}(2) = 
  \begin{tabular}{ |c | c c c c c c r | }
  \hline
    ${\rho_{16}(2)}$ & 1 & 2 & 3 & 4 & 5 & 8 & 16 \\
    \hline
    1  &    &   &  &    &  & &  \\
    2  & 1    &    &    &    &   &  &   \\
    3  &   &  &    &  &  &   &   \\
    4  &     & 1  &  &   &       &  &   \\
    5  &      &  &   &     &       &   &   \\
    8  &    &    &    1   &  1   &       &    &   \\
    16 &     &     &       &     &  1     & 1  &   \\
    \hline
  \end{tabular}
\]

\[ \rho_{16}(3)=
  \begin{tabular}{ |c | c c c c c c r | }
  \hline
    ${\rho_{16}(3)}$ & 1 & 2 & 3 & 4 & 5 & 8 & 16 \\
    \hline
    1  &    &   &  &    &  & &  \\
    2  &     &    &    &    &   &  &   \\
    3  &  1 &  &    &  &  &   &   \\
    4  &     &   &  &   &       &  &   \\
    5  &      &  &   &     &       &   &   \\
    8  &    &   1 &       &     &       &    &   \\
    16 &     &     &     1  &    1 &  1     &   &   \\
    \hline
  \end{tabular}
\]
\[ \rho_{16}(4)= 
  \begin{tabular}{ |c| c c c c c c r | }
  \hline
    ${\rho_{16}(4)}$ & 1 & 2 & 3 & 4 & 5 & 8 & 16 \\
    \hline
    1  &    &   &  &    &  & &  \\
    2  &     &    &    &    &   &  &   \\
    3  &   &  &    &  &  &   &   \\
    4  &    1 &   &  &   &       &  &   \\
    5  &     &  &   &     &       &   &   \\
    8  &    &   1 &       &     &       &    &   \\
    16 &     &     &     1  &    1 &      &   &   \\
    \hline
  \end{tabular}
\]

\[ \rho_{16}(5)=
  \begin{tabular}{ |c | c c c c c c r | }
  \hline
    ${\rho_{16}(5)}$ & 1 & 2 & 3 & 4 & 5 & 8 & 16 \\
    \hline
    1  &    &   &  &    &  & &  \\
    2  &     &    &    &    &   &  &   \\
    3  &   &  &    &  &  &   &   \\
    4  &     &   &  &   &       &  &   \\
    5  &    1  &  &   &     &       &   &   \\
    8  &    &    &       &     &       &    &   \\
    16 &     &   1  &     1  &     &      &   &   \\
    \hline
  \end{tabular}
\]

}
\end{exa}

\begin{rem}\label{rem25}
The algebra $\sA_n$  is not  semi-simple.  In fact  all its generators,
aside from the identity element,  are nilpotent.  It has dimension exactly $s$,
equal to its size. (Commutative subalgebras of matrix algebras over $\CC$ can have
larger dimension than their size $s$ for $s \ge 6$. The maximal dimension is 
$\lfloor \frac{1}{4}s^2\rfloor +1$, a result found by  I. Schur \cite{Sec05}. A simple proof was given by
Mirzakhani \cite{Mir98}).
\end{rem}

%
%
\subsection{Dirichlet Series and Cardinal's Multiplication Algebra $\sA_n$}\label{sec23}

Cardinal showed the  algebra $\sA_n$   to be 
a homomorphic image of  the algebra of formal
Dirichlet series.
Let $\sD_{\ZZ}$ denote the $\mathbb{Z}$-algebra   of all (formal) Dirichlet series 
$$
D(s) =\sum_{n=1}^{\infty} a_n n^{-s}
$$
with integer coefficients $a_n \in \ZZ$.
Here  addition  on Dirichlet series coefficients is
pointwise  and multiplication is Dirichlet convolution,
see Tenenbaum \cite[Sect. 2.3]{Ten15}.
We describe a (formal) Dirichlet series by the row vector $\bu= (a_1, a_2, a_3 , \cdots)$
of its coefficients. The invertible elements in $\sD_{\ZZ}$ are those with $a_1 = \pm 1.$

If we  restrict to  the subalgebra 
$\sD_{\ZZ}^{c}$ of Dirichlet series that absolutely converge to a function $f(s)$ on some right half-plane, then
viewed as such functions on a half-plane, these algebra operations correspond
to pointwise addition and multiplication of the functions  in their joint convergence domain.

For an integer $1 \le j<n$, 
 we let the {\em minus operator} give to each $k$  the value $k^{-}$ that is its predecessor of $\sS_n$,
 and we artificially define $1^{-} := 0$.
If $1 \le k< \lfloor \sqrt{n} \rfloor$ then $k^{-} = k-1$.

Cardinal's analysis implies the following result \cite[Propositions 9 to 13.]{Car10}. 
\begin{thm}\label{th22} {\em (Cardinal)}
The map $\tilde{\rho}_n: \sD_{\ZZ} \to \sA_n$
given by
$$
\sum_{k \ge 1} a_k k^{-s} \mapsto \sum_{m \in \sS_n } \big(\sum_{m^{-} <   k \le m }  a_k\big) \rho_n(m)
$$
is an algebra homomorphism. 
\end{thm}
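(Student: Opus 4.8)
The plan is to verify directly that $\tilde{\rho}_n$ is linear, multiplicative, and unital, using only the definition of $\rho_n(m)$ as ``multiplication by $m$'' on the basis of approximate divisors together with the identities of Proposition \ref{pr22}. Linearity is immediate from the definition, since the inner sums $\sum_{m^{-} < k \le m} a_k$ depend additively on the coefficient vector $\bu = (a_1, a_2, \dots)$. For the unit: the Dirichlet series $1 = 1 \cdot 1^{-s}$ has $a_1 = 1$ and $a_k = 0$ for $k > 1$; since the intervals $(m^{-}, m]$ for $m \in \sS_n$ partition $\{1, 2, \dots, n\}$ and only the interval with $m = 1$ contains the index $1$, the image is $\rho_n(1) = I$, as required.

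The substance is multiplicativity. First I would reduce to basis elements: because both the source and target are $\Z$-linear and $\tilde{\rho}_n$ is linear, it suffices to check $\tilde{\rho}_n(e_a \star e_b) = \tilde{\rho}_n(e_a)\tilde{\rho}_n(e_b)$ where $e_a$ is the Dirichlet series $a^{-s}$ (so $e_a \star e_b = e_{ab}$). Now $\tilde{\rho}_n(e_a) = \rho_n(m_a)$ where $m_a \in \sS_n$ is the unique approximate divisor with $a \in (m_a^{-}, m_a]$, i.e. $m_a = \lfloor n / \lfloor n/a \rfloor \rfloor$ is the ``rounded-up-to-an-approximate-divisor'' of $a$; equivalently $m_a$ is characterized by $\lfloor n/a \rfloor = \lfloor n/m_a\rfloor$. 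So the claim becomes: for all $a, b \ge 1$,
$$
\rho_n(m_{ab}) = \rho_n(m_a)\,\rho_n(m_b).
$$
The right-hand side is multiplication by $m_a$ followed by multiplication by $m_b$ on the approximate-divisor basis, which (by the associativity built into Theorem \ref{th21}) equals multiplication by $m_a m_b$ in the sense of the algebra — but on the basis, $\rho_n(m_a)\rho_n(m_b)$ acts on the generator indexed by an approximate divisor $k$ by sending it toward $\lfloor \frac{n}{k}/(m_a m_b)\rfloor$-type quantities, and the key floor-function identity (Proposition \ref{pr22}) $\lfloor \frac{1}{m_b}\lfloor \frac{n/k}{m_a}\rfloor\rfloor = \lfloor \frac{n/k}{m_a m_b}\rfloor$ together with $\lfloor n/(m_a m_b)\rfloor = \lfloor n/(ab)\rfloor$ shows this equals $\rho_n(m_{ab})$. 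I would carry this out by evaluating both sides on an arbitrary basis vector $v_k$ ($k \in \sS_n$) and comparing the resulting approximate divisor, invoking Proposition \ref{pr22} at the step where nested floors of $n/(k m_a m_b)$ must be collapsed.

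The main obstacle I anticipate is bookkeeping the precise action of the generators $\rho_n(m)$ on the basis: the matrices are lower triangular but not simply permutation-like (a single basis vector can map to a sum of several, as Example \ref{ex25a} shows, when $\lfloor n/(mk)\rfloor$ coincides for several $k$ or requires expressing $m \cdot k$ back in terms of approximate divisors via the minus operator). Making the identity $\rho_n(m_a)\rho_n(m_b) = \rho_n(m_{ab})$ rigorous therefore requires pinning down exactly how $\rho_n(m)$ is defined from ``multiplication by $m$'' — presumably $\rho_n(m)$ sends $v_k$ to $\sum_{\ell} c_{k\ell} v_\ell$ where $c_{k\ell}$ counts integers $j \in (\ell^-, \ell]$ with $\lfloor n/j\rfloor$ in the block associated to $mk$ — and then the composition identity follows from Proposition \ref{pr22} applied blockwise. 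Once that lemma is in hand the verification is a finite computation with floors; the rest is formal.
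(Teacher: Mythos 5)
Your overall strategy is the right one, and it is essentially the only route available: the paper itself gives no proof of Theorem \ref{th22}, deferring to Cardinal's Propositions 9--13, which proceed much as you outline. Reducing to the monomials $e_a=a^{-s}$ is legitimate here because $\tilde{\rho}_n$ reads only the coefficients with index $\le n$, and each such coefficient of a Dirichlet product involves only finitely many coefficients of the factors, so bilinearity suffices; and your identification $\tilde{\rho}_n(e_a)=\rho_n(m_a)$, with $m_a\in\sS_n$ the right endpoint of the block containing $a$, characterized by $\lfloor n/a\rfloor=\lfloor n/m_a\rfloor$, is correct.

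As written, though, the argument is not closed, and one of your guesses about the mechanism is wrong. First, the action of the generators is simpler than you fear: $\rho_n(m)$ sends the basis vector indexed by $j\in\sS_n$ to the basis vector indexed by $m_{mj}$ if $mj\le n$, and to $0$ otherwise; every column of $\rho_n(m)$ has at most one nonzero entry (the rows of Example \ref{ex25a} containing two $1$'s record two different basis vectors with the same image, not one vector mapping to a sum), so no counting coefficients $c_{k\ell}$ arise. Second, the step you defer is where all the work lies, and it rests on one identity you never isolate: for any $x\le n$ and any positive integer $j$, $\lfloor n/(j\,m_x)\rfloor=\lfloor \lfloor n/m_x\rfloor/j\rfloor=\lfloor \lfloor n/x\rfloor/j\rfloor=\lfloor n/(jx)\rfloor$, by Proposition \ref{pr22} and the characterization of $m_x$. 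Applying this repeatedly shows that both $\rho_n(m_a)\rho_n(m_b)$ and $\rho_n(m_{ab})$ send the basis vector indexed by $j$ into the block indexed by $\lfloor n/(abj)\rfloor$ (or to $0$ exactly when this is $0$), which is the matrix identity you need; your asserted equality $\lfloor n/(m_am_b)\rfloor=\lfloor n/(ab)\rfloor$ is a special case but is not immediate without it, and appealing to ``the associativity built into Theorem \ref{th21}'' is close to circular, since $m_am_b$ need not lie in $\sS_n$ and the meaning of ``multiplication by $m_am_b$'' in the algebra is precisely what is being established. Third, you omit the annihilation case $a,b\le n$ with $ab>n$: there $\tilde{\rho}_n(e_{ab})=0$, and you must check $\rho_n(m_a)\rho_n(m_b)=0$, which follows because $\lfloor n/(m_am_b)\rfloor=\lfloor n/(ab)\rfloor=0$ forces $m_am_b>n$, so the composite kills every basis vector. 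With these three points supplied, your outline becomes a complete proof.
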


The kernels $\ker(\tilde{\rho}_n)$ of these maps seem worthy of further study, but
we will not treat them here.
\subsection{Matrix Image of Dirichlet series $\zeta(s)$ and $\frac{1}{\zeta(s)}$}\label{sec32}
 We  look at the image of
the Riemann zeta function $\zeta(s) = \sum_{k=1}^{\infty} k^{-s}$ and its 
inverse $1/\zeta(s) = \sum_{k=1}^{\infty} \mu(k) k^{-s}$ under this homomorphism.
 
 View $n$ as fixed.
Define the vector $\bu :=\bu_{n}$  to be a $1 \times s(n)$ row vector
that has   entries that  sum the function $1$ over those integers 
in the half-open intervals $(m^{-}, m]$ for $m \in \sS_n$.
The lower triangular matrix  $Z_n :=\rho_n(\bu_n)$ is then a finite analogue of the Riemann zeta function. 
Cardinal  also introduces a vector $\bmu := \bmu_n$ 
whose entries sum  the M\"{o}bius function over the interval $(k^{-}, k]$; the corresponding inverse
matrix $Z_n^{-1} := \rho_n(\bmu_n)$ is then a finite analogue of the  inverse of the Riemann zeta function.

\begin{exa}\label{ex26a}
{\rm
For $n=16$ the 
vector $\bu_{16}= (1, 1, 1, 1, 1, 3, 8)$.
The matrix $Z_{n}$ for ${\rho_{16}(\bu)}$ is
(omitting $0$ entries)
\[ Z_{16}  = 
  \begin{tabular}{ |c | c c c c c c r | }
  \hline
    $\rho_{16}(\bu)$ & 1 & 2 & 3 & 4 & 5 & 8 & 16 \\
    \hline
    1  &  1 &   &  &    &  & &  \\
    2  & 1    &  1  &    &    &   &  &   \\
    3  &  1 & 0 &   1 & &  &   &   \\
    4  &    1 & 1   & 0  &   1   & &  &   \\
    5  &    1  &  0&   0&   0  &  1    &   &   \\
    8  &   3 &   2 &    1   &  1   &   0    &  1  &   \\
    16 &  8   &  4   &    3   &  2   &  2   & 1  &  1 \\
    \hline
  \end{tabular}
\]
}
\end{exa}

\begin{exa}\label{ex27a}
{\rm
For $n=16$ the inverse (M\"{o}bius) vector is $\bmu=( 1, -1,-1, 0, -1, 0, 1)$.
The matrix $Z_n^{-1} = \rho_{16}(\bmu)$ is 
\[ (Z_{16})^{-1} =
  \begin{tabular}{ |c | c c c c c c r | }
  \hline
    $ \rho_{16}(\bmu)$ & 1 & 2 & 3 & 4 & 5 & 8 & 16 \\
    \hline
    1  &  1 &   &  &    &  & &  \\
    2  & -1    &  1  &    &    &   &  &   \\
    3  &  -1 & 0 &   1 &  &  &   &   \\
    4  &    0 & -1  &  0& 1  &       &  &   \\
    5  &    -1  &  0&   0&   0  &  1     &   &   \\
    8  &   0 &   -1 &    -1   &  -1   &   0    &  1  &   \\
    16 &  1   &  -1   &   - 2   &  -1   &  -2     & -1  & 1  \\
    \hline
  \end{tabular}
\]
}
\end{exa}

\section{Cardinal's Algebra of Symmetric Matrices}

\subsection{Cardinal's Algebra has symmetric matrix image}

A key observation of Cardinal concerns the  image of the algebra
$\sA_n$  under
left multiplication by a matrix $T$ having ones above and on its antidiagonal $a_{i, s+1-i}$,
and zeros below the antidiagonal.  An example of  $T$ for $s=7$  is given in example \ref{ex30a}.

Cardinal  showed  (\cite[Proposition 20]{C10E}) the following result.

\begin{thm}\label{th31}
 {(Cardinal)}
The linear map  $T: \sA_n \to Mat_{s(n) \times s(n)}$ given
by $A \mapsto TA$ has image in the set
of real symmetric matrices $Sym_{s(n) \times s(n)}.$
\end{thm}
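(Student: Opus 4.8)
The plan is to reduce the symmetry statement to an identity about the entries of the multiplication-by-$k$ matrices $\rho_n(k)$, and then to prove that identity from the combinatorics of approximate divisors. Since the map $A \mapsto TA$ is linear and $\sA_n$ is spanned by the generators $\rho_n(k)$ for $k \in \sS_n$ (Theorem \ref{th21}), it suffices to show that $T\rho_n(k)$ is symmetric for each approximate divisor $k$. So first I would fix $k \in \sS_n$ and write out the $(i,j)$-entry of $T\rho_n(k)$ explicitly. Writing $T = (t_{ij})$ with $t_{ij} = 1$ when $i + j \le s+1$ and $t_{ij} = 0$ otherwise, we have $(T\rho_n(k))_{ij} = \sum_{\ell \le s+1-i} (\rho_n(k))_{\ell j}$. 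Now $(\rho_n(k))_{\ell j}$ is $1$ precisely when $k_\ell$ is the approximate divisor equal to the "rounding" of $k\cdot k_j$ into $\sS_n$ and $0$ otherwise (more precisely, $\rho_n(k)$ records multiplication-by-$k$ on the ordered basis of approximate divisors, so each column $j$ has at most one nonzero entry, in the row $\ell = \ell(k,j)$ determined by where $k \cdot k_j$ lands among the $k_i$). Hence $(T\rho_n(k))_{ij}$ counts whether that distinguished row index $\ell(k,j)$ satisfies $\ell(k,j) \le s+1-i$, i.e. it equals $1$ iff $i + \ell(k,j) \le s+1$.

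The symmetry to be proven therefore becomes the combinatorial assertion: for all $i, j$,
\[
i + \ell(k,j) \le s+1 \quad\Longleftrightarrow\quad j + \ell(k,i) \le s+1 .
\]
Using the involution $\widehat{k_i} = k_{s+1-i}$ from Lemma \ref{le22}, the condition $i + \ell(k,j) \le s+1$ is equivalent to $\ell(k,j) \le s+1-i$, i.e. $k_{\ell(k,j)} \le \widehat{k_i}$. So I would restate the multiplication rule in the form: $k_{\ell(k,j)}$ is the largest approximate divisor $\le \lfloor \tfrac{n}{\,\lfloor n/(k k_j)\rfloor\,}\rfloor$-type quantity, and unwind it to the clean statement that $k_{\ell(k,j)} \le \widehat{k_i}$ holds iff $k$ "multiplies $k_j$ into the down-set generated by $\widehat{k_i}$," which by the floor identities should be symmetric in $i \leftrightarrow j$. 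Concretely, I expect the key to be an identity of the shape
\[
\Big\lfloor \frac{n}{k_i} / k_j \Big\rfloor \ \text{and}\ \Big\lfloor \frac{n}{k_j}/k_i\Big\rfloor
\]
landing in corresponding positions relative to the involution — this is exactly the content flagged in Proposition \ref{pr22} (the floor-function identities Cardinal uses to get commutativity and symmetry), so I would invoke that proposition as the engine.

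The main obstacle, and the step deserving the most care, is handling the passage from the integer $k\cdot k_j$ (which need not itself be an approximate divisor) to the index $\ell(k,j)$ of where it "rounds" in $\sS_n$, together with the boundary/degenerate cases: when $k k_j > n$ (so the product falls off the top and the column is zero), when $k_j$ or $\widehat{k_i}$ equals the ambiguous middle element $m = \lfloor\sqrt n\rfloor$ in the case $m^2 \le n < m(m+1)$, and the artificial convention $1^- := 0$ entering through the definition of $\rho_n$ via Theorem \ref{th22}. I would dispatch these by splitting $\sS_n = \sS_n^- \cup \sS_n^+$ and checking the inequality $i + \ell(k,j) \le s+1$ separately according to which halves $k_i, k_j$ lie in, using $\widehat{\ \cdot\ }$ to move everything into $\sS_n^-$ where the approximate divisors are just $1, 2, \dots, \lfloor\sqrt n\rfloor$ and the floor identities $\lfloor \lfloor n/a\rfloor / b\rfloor = \lfloor n/(ab)\rfloor$ apply transparently. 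Once the equivalence displayed above is verified in each case, symmetry of $T\rho_n(k)$ — hence of every element of $T\sA_n$ — follows, and in particular $\mathcal{U}_n = T \rho_n(\bu_n)$ and, after the analogous argument, $\mathcal{M}_n$ are symmetric.
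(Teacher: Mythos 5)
Your proposal is correct and follows essentially the route the paper points to: the paper itself gives no proof of Theorem \ref{th31} (it cites Cardinal's Proposition 20), but it attributes the symmetry precisely to the involution of Lemma \ref{le22} together with the floor identities of Proposition \ref{pr22}, which is the reduction-to-generators argument you outline, checking that $T\rho_n(k)$ is symmetric for each $k\in\sS_n$. One simplification worth noting: since $k_{\ell(k,j)}$ is the least element of $\sS_n$ that is $\ge k\,k_j$ and $\widehat{k_i}=\lfloor n/k_i\rfloor$ lies in $\sS_n$, your displayed equivalence collapses (using integrality, or Proposition \ref{pr22}) to the manifestly symmetric condition $k\,k_i\,k_j\le n$, so the case split over $\sS_n^{-}\cup\sS_n^{+}$ and the special treatment of the middle element are not actually needed.
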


To put this result in perspective, 
when $T$  multiplies on the left  a general lower triangular matrix $L$,
the resulting matrix $TL$ is in general {\em not} symmetric. This  symmetry property   is
a special property of elements of the algebra $\sA_n$. This symmetry property encodes the involution
$k \mapsto \widehat{k}$, which in turn encodes the identity in Proposition \ref{pr22}.

Note that this map $A \mapsto TA$  is not an algebra homomorphism, it preserves addition
but it  does not respect matrix multiplication  in general, i.e. usually  $T$ does not 
commute with all  the elements of $\sA_n$.

\begin{exa}\label{ex30a}
{\rm 
For $n=16$,  we have $s(n)= 7$ and the matrix  $T$  is
(omitted entries are $0$). The borders giving the values of
the approximate divisors are not part of the matrix.

\[  T=
  \begin{tabular}{ |c | c c c c c c r | }
  \hline
    $T$ & 1 & 2 & 3 & 4 & 5 & 8 & 16 \\
    \hline
    1  & 1   & 1  & 1 & 1   & 1  & 1 & 1 \\
    2  & 1    & 1   & 1   & 1  &  1  & 1 &   \\
    3  & 1   & 1 &  1  & 1 &  1&   &  \\
    4  & 1   & 1  &  1 & 1  &       &   &   \\
    5  & 1     &1  &   1&     &       &  &   \\
    8  & 1   &   1 &      &     &      &    &   \\
    16 &  1  &     &       &     &      &   &   \\
    \hline
  \end{tabular}
\]
}
\end{exa}

\begin{exa}\label{ex31a}
{\rm 
For $n=16$, multiplication of the matrix $\rho(2)$ by $T$  yields
a symmetric matrix $T \rho_{16}(2)$
(omitted entries are $0$).
\[ T \rho_{16}(2) =
  \begin{tabular}{ |c | c c c c c c r | }
  \hline      $T\rho_{16}(2)$ & 1 & 2 & 3 & 4 & 5 & 8 & 16 \\
    \hline
    1  & 1   & 1  & 1 & 1   & 1  & 1 &  \\
    2  & 1    & 1   & 1   & 1  &    &  &   \\
    3  & 1   & 1 &    &  &  &   &  \\
    4  & 1   & 1  &   &   &       &   &   \\
    5  & 1     &  &   &     &       &  &   \\
    8  & 1   &    &      &     &      &    &   \\
    16 &    &     &       &     &      &   &   \\
    \hline
  \end{tabular}
\]
}
\end{exa}

\subsection{Cardinal's $\mathcal{U}$-matrices}

Cardinal  now introduces an $s \times s$ symmetric matrix  $\mathcal{U}_n$
corresponding to the zeta vector $\bu$:
$$
{\mathcal U}_n := T_s Z_n = T_s \rho_n(\bu).
$$
The following characterization of this matrix  shows
that  it encodes the multiplication  for the approximate divisors in a manifestly
symmetric form, which we stated as the definition in the introduction of
the paper.

\begin{lem}\label{le22b}
{\em (Cardinal)} 
Let $K = \rm{diag}(k_1,...,k_s)$, where the $k_j$ run through the elements of 
$\mathcal{S} = \mathcal{S}_n$ in increasing order. Thus $\widehat{k}_i = k_{s+1-i}.$
Then  $\mathcal{U}={\mathcal{U}_n} $ is the  $s \times s$
integer matrix with  entries  $\mathcal{U}_{i,j} = \lfloor \frac{n}{k_i k_j} \rfloor$.
That is,
\[
{\mathcal{U}_n} = 
 \begin{pmatrix}
 \lfloor  n/(k_1k_1) \rfloor& \lfloor n/(k_1k_2)\rfloor & \cdots & \lfloor n/(k_1k_{s-1}) \rfloor & 
 \lfloor n/(k_1k_{s}) \rfloor \\
 \lfloor  n/(k_2k_1) \rfloor &  \lfloor n/(k_2k_2) \rfloor & \cdots & \lfloor  n/(k_2k_{s-1}) \rfloor& 0 \\
  \vdots  & \vdots  & \ddots & \vdots &  \vdots  \\
 \lfloor   n/(k_{s-1}k_1) \rfloor&  \lfloor n/(k_{s-1}k_2) \rfloor& \cdots & 0 & 0 \\
  \lfloor  n/(k_sk_1)\rfloor& 0 & \cdots & 0 & 0
 \end{pmatrix}
\]
\end{lem}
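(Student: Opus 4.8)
The plan is to evaluate the matrix product $\mathcal{U}_n=T_sZ_n$ (with $Z_n=\rho_n(\bu)$) entry by entry and to reduce the whole computation to a telescoping sum along each column of $Z_n$. First I would unpack the left factor: the matrix $T_s$ has $(T_s)_{i,l}=1$ exactly when $i+l\le s+1$ and $(T_s)_{i,l}=0$ otherwise, so left multiplication by $T_s$ replaces each column of $Z_n$ by its partial sums taken from the top,
\[
(\mathcal{U}_n)_{i,j}\;=\;\sum_{l=1}^{\,s+1-i}(Z_n)_{l,j}.
\]
Because $Z_n$ is lower triangular, when $i+j>s+1$ this sum runs only over indices $l\le s+1-i<j$ and hence is $0$; that already accounts for the zero block strictly below the antidiagonal in the asserted form of $\mathcal{U}_n$, so from here on I assume $i+j\le s+1$.

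Next I would pin down the entries of the $j$-th column of $Z_n=\tilde{\rho}_n(\zeta)$. Recall from Section~\ref{sec32} that $Z_n$ is the image under $\tilde{\rho}_n$ of the Riemann zeta series, that the $j$-th standard basis vector of $\C^{s}$ corresponds to the approximate divisor $k_j$, i.e.\ to $\tilde{\rho}_n(k_j^{-s})=\rho_n(k_j)$ (since $k_j\in\sS_n$), and that $\tilde{\rho}_n$ is an algebra homomorphism (Theorem~\ref{th22}). Hence the $j$-th column of $Z_n$ represents $\tilde{\rho}_n(\zeta)\,\tilde{\rho}_n(k_j^{-s})=\tilde{\rho}_n\!\big(\sum_{k\ge 1}(kk_j)^{-s}\big)$, and the defining formula for $\tilde{\rho}_n$ (with the conventions $k_0:=0$, and $k_{l-1}$ the predecessor of $k_l$ in $\sS_n$) yields
\[
(Z_n)_{l,j}\;=\;\#\{\,l'\in\ZZ:\ k_{l-1}<l'\le k_l,\ k_j\mid l'\,\}\;=\;\Big\lfloor\frac{k_l}{k_j}\Big\rfloor-\Big\lfloor\frac{k_{l-1}}{k_j}\Big\rfloor,
\]
which I would check against Examples~\ref{ex26a} and~\ref{ex27a}.

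Substituting this into the column sum and telescoping gives
\[
(\mathcal{U}_n)_{i,j}\;=\;\sum_{l=1}^{\,s+1-i}\Big(\Big\lfloor\frac{k_l}{k_j}\Big\rfloor-\Big\lfloor\frac{k_{l-1}}{k_j}\Big\rfloor\Big)\;=\;\Big\lfloor\frac{k_{s+1-i}}{k_j}\Big\rfloor .
\]
By Lemma~\ref{le22} one has $k_{s+1-i}=\widehat{k_i}=\lfloor n/k_i\rfloor$, and the elementary nested-floor identity $\big\lfloor\lfloor a/b\rfloor/c\big\rfloor=\lfloor a/(bc)\rfloor$ for positive integers $a,b,c$ (among the floor identities of Proposition~\ref{pr22}) turns the right-hand side into $\lfloor n/(k_ik_j)\rfloor$. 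In the complementary range $i+j>s+1$ we saw the entry is $0$, which matches $\lfloor n/(k_ik_j)\rfloor$ since then $k_j>\lfloor n/k_i\rfloor$ forces $k_ik_j>n$; so the two expressions agree for all $i,j$.

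The step I expect to require the most care is the second one: one must correctly match the standard basis of $\C^{s}$ with the ordered approximate divisors, confirm $\tilde{\rho}_n(k_j^{-s})=\rho_n(k_j)$ for $k_j\in\sS_n$, and keep the boundary bookkeeping straight (the convention $1^-=0$, the predecessor map, and the treatment of products $kk_j$ that exceed $n$), so that the homomorphism identity $Z_n\,\rho_n(k_j)=\tilde{\rho}_n(\zeta\,k_j^{-s})$ can be applied cleanly. Once the closed form for $(Z_n)_{l,j}$ is in hand, the rest is the one-line telescoping above together with the standard floor identity.
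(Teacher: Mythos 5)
Your proposal is correct, and it is worth noting that these notes do not actually contain a proof of Lemma \ref{le22b}: the statement is quoted from Cardinal, and the text only remarks afterwards that the symmetry of the displayed matrix follows from the floor identities of Proposition \ref{pr22}. Your derivation fills this in by a different, more computational route than Cardinal's structural one (which obtains the symmetric form of $T A$ for general $A\in\mathcal{A}_n$ from the commuting-floor identities): you expand $(\mathcal{U}_n)_{i,j}=\sum_{l\le s+1-i}(Z_n)_{l,j}$, identify column $j$ of $Z_n$ with the first column of $\widetilde{\rho}_n\bigl(\zeta(s)k_j^{-s}\bigr)$ via the homomorphism of Theorem \ref{th22}, read off $(Z_n)_{l,j}=\lfloor k_l/k_j\rfloor-\lfloor k_{l-1}/k_j\rfloor$, telescope to $\lfloor k_{s+1-i}/k_j\rfloor$, and finish with Lemma \ref{le22} and Proposition \ref{pr22}; the vanishing below the antidiagonal is handled correctly on both sides, and your column formula agrees with Example \ref{ex26a}. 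The only steps that must be written out explicitly—precisely the ones you flag—are the two facts behind the column identification: $e_j=\rho_n(k_j)e_1$ (so that column $j$ of $Z_n$ equals the first column of $Z_n\rho_n(k_j)$), and that for each $m\in\mathcal{S}_n$ the first column of $\rho_n(m)$ is the standard basis vector at the position of $m$; both are immediate from the definition of $\rho_n(m)$ as multiplication by $m$ on the ordered basis of approximate divisors (it sends $1\mapsto m\in\mathcal{S}_n$), after which the first column of $\widetilde{\rho}_n(\sum_k a_k k^{-s})$ has $l$-th entry $\sum_{k_{l-1}<k\le k_l}a_k$ with the convention $k_0=0$. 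Your argument does lean on Theorem \ref{th22} (stated here without proof, as Cardinal's result), but that is a legitimate dependence within this paper's framework; what the homomorphism buys you is that you never need the explicit multiplication rule $(\rho_n(m))_{l,j}$ for general $j$, which a direct column-by-column computation of $Z_n$ would require.
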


The fact that the matrix in Lemma \ref{le22b}  is symmetric
easily  follows from
the following  set of identities  for the floor function
\cite[Lemma 6]{C10E}.

\begin{prop}\label{pr22} {\em (Cardinal)}
For all  positive  integers $n, i, j$, there holds
$$
\lfloor \frac{1}{i} \lfloor \frac{n}{j} \rfloor \rfloor =
\lfloor \frac{1}{j} \lfloor \frac{n}{i} \rfloor \rfloor =
 \lfloor \frac{n}{ij} \rfloor.
$$
\end{prop}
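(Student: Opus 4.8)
The plan is to reduce both equalities to the single auxiliary identity
\[
\left\lfloor \frac{\lfloor x\rfloor}{m} \right\rfloor \;=\; \left\lfloor \frac{x}{m} \right\rfloor
\qquad\text{for every real }x\text{ and every positive integer }m.
\]
Granting this, I would apply it with $x = n/j$ and $m = i$ to obtain $\lfloor \frac1i \lfloor \frac nj\rfloor\rfloor = \lfloor \frac{n}{ij}\rfloor$. The companion statement $\lfloor \frac1j \lfloor \frac ni\rfloor\rfloor = \lfloor \frac{n}{ij}\rfloor$ then follows simply by interchanging the names of $i$ and $j$ (the right-hand side $\lfloor n/(ij)\rfloor$ being symmetric in $i,j$), and the two identities together are exactly the Proposition.

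To prove the auxiliary identity, I would use the division algorithm: write $\lfloor x\rfloor = qm + r$ with $q \in \Z$ and $0 \le r \le m-1$, so that by definition $\lfloor \lfloor x\rfloor / m\rfloor = q$. Writing also $x = \lfloor x\rfloor + \theta$ with $0 \le \theta < 1$, one computes $x/m = q + (r+\theta)/m$, and since $0 \le r + \theta < (m-1) + 1 = m$ the term $(r+\theta)/m$ lies in $[0,1)$; hence $\lfloor x/m\rfloor = q$ as well. Alternatively, one can argue via the order characterization of the floor: for an integer $q$ one has $q \le \lfloor x\rfloor/m \iff qm \le \lfloor x\rfloor \iff qm \le x \iff q \le x/m$ (using that $qm$ is an integer, so it is $\le \lfloor x\rfloor$ exactly when it is $\le x$), so $\lfloor \lfloor x\rfloor/m\rfloor$ and $\lfloor x/m\rfloor$ are dominated by the same integers and therefore coincide.

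The entire argument is elementary, and I do not anticipate a genuine obstacle; the only point requiring a moment's attention is the inequality $r + \theta < m$, which is precisely what prevents the two fractional contributions from pushing the quotient up to the next integer. This is the identity that, applied to the entries $\lfloor n/(k_i k_j)\rfloor$ of $\mathcal{U}_n$, underlies the symmetry asserted in Lemma~\ref{le22b} and the commutativity in Theorem~\ref{th21}.
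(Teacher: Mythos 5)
Your proof is correct: the reduction to the standard identity $\bigl\lfloor \lfloor x\rfloor/m \bigr\rfloor = \lfloor x/m\rfloor$ (for real $x$ and a positive integer $m$), applied with $x=n/j$, $m=i$ and then with $i,j$ interchanged, gives exactly the stated equalities, and your division-algorithm verification of the auxiliary identity is sound. The paper offers no details beyond the remark that ``these identities may be checked by a calculation,'' and your argument is precisely that calculation carried out in full, so there is nothing to compare beyond noting that you have supplied what the paper leaves to the reader.
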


These identities may be checked by a calculation. In fact
 that dilated floor function  $\lfloor \alpha x \rfloor$
 and $\rfloor \beta x\rfloor$ never commute for real $(\alpha, \beta)$
except for  the discrete family $(\alpha,\beta) = (\frac{1}{i}, \frac{1}{j})$ with $i, j$  positive integers
as given in Proposition \ref{pr22},
aside from three continuous families: $\alpha=\beta$, $\alpha=0$
and $\beta=0$,  see \cite{LMR15}. 

Note that  all entries on the anti-diagonal $i+j=s+1$ of $\mathcal{U}_n$ are $1$, since
$$
\mathcal{U}_{i, s+1- i} = \lfloor \frac{n}{k_i k_{s-i+1}} \rfloor = 1
$$

\begin{exa}\label{ex33a}
{\rm
For $n=16$ the  matrix $\mathcal{U}_n$   is 
(omitted entries are $0$)
 
\[ \mathcal{U}_{16}=
  \begin{tabular}{ |c| c c c c c c r | }
  \hline
    ${\mathcal{U}}_{16}$ & 1 & 2 & 3 & 4 & 5 & 8 & 16 \\
    \hline
    1  & 16   & 8   & 5  & 4   & 3  & 2 & 1 \\
    2  & 8    & 4   & 2   & 2   & 1   & 1 &   \\
    3  & 5  & 2 &  1  & 1 & 1 &   &   \\
    4  & 4    & 2  & 1  & 1   &       &   &   \\
    5  & 3 & 1 & 1 &     &       &   &   \\
    8  & 2    & 1   &       &     &       &   &   \\
    16 & 1    &     &       &     &       &   &   \\
    \hline
  \end{tabular}
\]
}
\end{exa}

\quad The matrix ${\mathcal{U}_n}$
is invertible, and since $\det( {\mathcal{U}_n}) = \pm 1$ it is
also an integer matrix.

\begin{exa} \label{ex23}
{\rm
 For the case $n=16$ the matrix ${\mathcal{U}_n}^{-1}$ is
(omitted  entries are $0$)
\[ (\mathcal{U}_{16})^{-1} \, \,=
  \begin{tabular}{ | c  | c c c c c c r | }
  \hline
    $({\mathcal{U}}_{16})^{-1}$ & 1 & 2 & 3 & 4 & 5 & 8 & 16 \\
    \hline
    1  &  &    &   &    &   & & 1 \\
    2  &      &   &    &   &    & 1 & -2  \\
    3  & &  &    &  & 1 &  -1 & -1  \\
    4  &    &  &   & 1   &   -1    & -1  &  1 \\
    5  &  &  & 1 &   -1  &  0     &  0 &   -1\\
    8  &     & 1   &    -1   &  -1   &     0  & 0  & 1  \\
    16 & 1    &  -2   &    -1   &  1   &   -1    &  1 &  2 \\
    \hline
  \end{tabular}
\]
}
\end{exa}


%
%

\section{Cardinal's Matrix $\mathcal{M}_n$ and the Mertens Function}\label{sec4}

Cardinal  also introduces an $s \times s$ symmetric matrix  $\mathcal{M}_n$, that corresponds
to the M\"{o}bius vector $\bmu$ similarly.  

\subsection{Cardinal's Matrix $\mathcal{M}_n$}\label{sec41}
\begin{defi} \label{de34a}
The $s \times s$ symmetric matrix  $\mathcal{M}_n$ is defined by
$$
\mathcal{M} = {\mathcal{M}}_n := \sT_s\, \mathcal{U}_n^{-1}\sT_n,
$$
in which $\sT_n$ is an $s(n) \times s(n)$  matrix having  
$ \sT_{ij} = 1$ if $ i+j \le s(n)+1$, and $0$
otherwise (i.e.  if $s(n) +2 \le i+j \le 2s(n)$). 
\end{defi}

Note that by definition of $\mathcal{U}_n$, one also has 
$$
\mathcal{M}_n= \sT_s (Z_n)^{-1}= \sT_s \rho (\bmu).
$$

\begin{exa}\label{ex42a}
{\rm
For  $n=16$ the matrix ${\mathcal{M}_n}$ is:
(omitted entries are  $0$)

\[ \mathcal{M}_{16} = 
  \begin{tabular}{ |c | c c c c c c r | }
  \hline
    ${\mathcal{M}}_{16}$ & 1 & 2 & 3 & 4 & 5 & 8 & 16 \\
    \hline
    1  & -1   & -2   & -2  & -1  & -1  & 0 & 1 \\
    2  & -2    & -1   & 0  & 0   & 1   & 1 &   \\
    3  & -2 & 0 &  1  & 1 & 1 &   &   \\
    4  & -1    & 0  & 1  & 1   &       &   &   \\
    5  & -1 & 1 & 1 &     &       &   &   \\
    8  & 0   & 1   &       &     &       &   &   \\
    16 & 1    &     &       &     &       &   &   \\
    \hline
  \end{tabular}
\]
}
\end{exa}

A main result of Cardianal is that the entries of this matrix 
are expressible using the {\em Mertens function}
$$
M(x) := \sum_{1 \le j \le \lfloor x \rfloor} \mu(j).
$$
Here we set $M(x)=0$ for $0 \le x <1.$

\begin{thm} {\em (Cardinal)}\label{th41}
The entries of $\mathcal{M}_n$ are exactly the Mertens function
evaluation of the entries of $\mathcal{U}_n$, i.e. 
$$
(\mathcal{M}_n)_{ij} = M( \mathcal{U}_{ij}), ~~~\mbox{for}~~ i, j \in \sS_n,
$$
where $M(x)$ is the Mertens function. That is
\[
{\mathcal{M}_n} = 
 \begin{pmatrix}
 M( \lfloor  n/(k_1k_1) \rfloor )&M( \lfloor n/(k_1k_2)\rfloor )& \cdots &M( \lfloor n/(k_1k_{s-1}) \rfloor )& 
M( \lfloor n/(k_1k_{s}) \rfloor )\\
 M(\lfloor  n/(k_2k_1) \rfloor )& M( \lfloor n/(k_2k_2) \rfloor )& \cdots &M( \lfloor  n/(k_2k_{s-1}) \rfloor )& 0 \\
  \vdots  & \vdots  & \ddots & \vdots &  \vdots  \\
 M(\lfloor   n/(k_{s-1}k_1) \rfloor )& M( \lfloor n/(k_{s-1}k_2) \rfloor )& \cdots & 0 & 0 \\
 M( \lfloor  n/(k_sk_1)\rfloor )& 0 & \cdots & 0 & 0
 \end{pmatrix}
\]
\end{thm}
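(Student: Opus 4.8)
The plan is to compute the entries of $\mathcal{M}_n = T_s\,\rho_n(\bmu)$ directly, one column at a time. First I would record that for any matrix $A$ one has $(T_s A)_{ij} = \sum_{l=1}^{s+1-i} A_{lj}$, since $(T_s)_{il}=1$ exactly when $i+l\le s+1$; thus $(T_s A)_{ij}$ is the sum of the top $s+1-i$ entries of the $j$-th column of $A$. Next I would pin down the columns of the generators $\rho_n(m)$, $m\in\sS_n$: from the homomorphism property of Theorem \ref{th22} together with the interpretation of $\rho_n(m)$ as multiplication by $m$ (Theorem \ref{th21}), the $j$-th column of $\rho_n(m)$ is the standard basis vector $e_l$, where $k_l$ is the least approximate divisor that is $\ge m k_j$, whenever $m k_j\le n$, and is the zero column when $m k_j>n$. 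Since $\rho_n(\bmu) = \sum_{m\in\sS_n}\bmu_m\,\rho_n(m)$ with $\bmu_m = \sum_{m^{-}<k\le m}\mu(k) = M(m)-M(m^{-})$, and since for $m\in\sS_n$ the condition $m k_j\le n$ is the same as $m\in\{k_1,\dots,k_{s+1-j}\}$, the $j$-th column of $\rho_n(\bmu)$ equals $\sum_{a=1}^{s+1-j}\bmu_{k_a}\,e_{r_a}$, where $r_a$ is the index of the least approximate divisor that is $\ge k_a k_j$. Combining these gives $(\mathcal{M}_n)_{ij} = \sum \bmu_{k_a}$, the sum running over those $a$ with $1\le a\le s+1-j$ and $r_a\le s+1-i$.

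The heart of the proof is to simplify this index set. By definition of $r_a$, one has $r_a\le s+1-i$ precisely when $k_{s+1-i}\ge k_a k_j$. Lemma \ref{le22} identifies $k_{s+1-i}$ with $\lfloor n/k_i\rfloor$, so the condition becomes $k_a k_j\le\lfloor n/k_i\rfloor$, hence (since $k_a$ is a positive integer) $k_a\le\bigl\lfloor\tfrac{1}{k_j}\lfloor n/k_i\rfloor\bigr\rfloor$, which by Proposition \ref{pr22} equals $\lfloor n/(k_i k_j)\rfloor = \mathcal{U}_{ij}$. The extra range restriction $a\le s+1-j$, that is, $k_a\le\lfloor n/k_j\rfloor$, is automatically implied because $\lfloor n/(k_i k_j)\rfloor\le\lfloor n/k_j\rfloor$. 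Therefore $(\mathcal{M}_n)_{ij} = \sum_{a:\,k_a\le\,\mathcal{U}_{ij}}\bmu_{k_a}$.

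To finish I would telescope. If $k_i k_j>n$ then $\mathcal{U}_{ij}=0$, no approximate divisor satisfies $k_a\le 0$, the sum is empty, and $(\mathcal{M}_n)_{ij}=0=M(0)=M(\mathcal{U}_{ij})$. Otherwise $\mathcal{U}_{ij}=\lfloor n/(k_i k_j)\rfloor$ is itself an approximate divisor, say $k_{a^{\ast}}$, so $\{a:k_a\le\mathcal{U}_{ij}\}=\{1,\dots,a^{\ast}\}$ is an initial segment of indices; using $k_{a-1}=k_a^{-}$ (with $k_0=0$ and $M(0)=0$) the sum telescopes to $\sum_{a=1}^{a^{\ast}}\bigl(M(k_a)-M(k_a^{-})\bigr)=M(k_{a^{\ast}})=M(\mathcal{U}_{ij})$, which is the asserted identity.

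I expect the second paragraph to be the crux: turning the combinatorial condition ``the reduction of $k_a k_j$ to an approximate divisor lands among the first $s+1-i$ basis vectors'' into the clean arithmetic inequality $k_a\le\lfloor n/(k_i k_j)\rfloor$ is exactly where Cardinal's two structural inputs --- the order-reversing involution of Lemma \ref{le22} and the nested-floor identity of Proposition \ref{pr22} --- must be combined, and one must be careful with the integrality step in manipulating the floors and with verifying that the $a\le s+1-j$ constraint is subsumed. Everything else (the action of $T_s$, the column shape of $\rho_n(m)$, and the telescoping) is routine. I would also remark that the identical computation applied to an arbitrary Dirichlet series $\sum_{k\ge1}a_k k^{-s}$ shows that the $(i,j)$ entry of $T_s$ times the image matrix of that series equals $\sum_{k\le\lfloor n/(k_i k_j)\rfloor}a_k$; the zeta series recovers $\mathcal{U}_n$ (a consistency check) and the M\"{o}bius series is Theorem \ref{th41}.
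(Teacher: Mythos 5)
Your argument is correct, but note that the paper itself offers no proof of Theorem \ref{th41}: it is stated as Cardinal's result with the proof left to \cite{C10E}, so there is no in-paper argument to compare against, and your write-up supplies a self-contained derivation from exactly the ingredients the paper does record, namely the identity $\mathcal{M}_n=\sT_s\,\rho_n(\bmu)$, the involution $k_{s+1-i}=\lfloor n/k_i\rfloor$ of Lemma \ref{le22}, the nested-floor identity of Proposition \ref{pr22}, and the interval coefficients $\bmu_m=M(m)-M(m^{-})$. The one step you should flag as definitional rather than derived is your explicit column description of $\rho_n(m)$ (the $j$-th column is $e_l$ with $k_l$ the least element of $\sS_n$ that is $\geq mk_j$, and zero when $mk_j>n$): the paper only describes $\rho_n(m)$ informally as ``multiplication by $m$ on the basis of approximate divisors,'' so this rule does not literally follow from Theorems \ref{th21} and \ref{th22}; you should either adopt it as the definition or check it, observing that it is equivalent to sending $k_j$ to $\lfloor n/\lfloor n/(mk_j)\rfloor\rfloor$ and that it matches the displayed matrices $\rho_{16}(2),\dots,\rho_{16}(5)$. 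Granting that, the rest is sound: the reduction of the condition $r_a\leq s+1-i$ to $k_a\leq\lfloor n/(k_ik_j)\rfloor$ correctly combines Lemma \ref{le22} with Proposition \ref{pr22} (including the integrality step), the constraint $a\leq s+1-j$ is indeed absorbed, and the telescoping works because $\lfloor n/(k_ik_j)\rfloor$ is itself an element of $\sS_n$ whenever $k_ik_j\leq n$, while the case $k_ik_j>n$ yields the zero entries below the antidiagonal. Your closing observation that the same computation gives $(\sT_s\tilde{\rho}_n(\sum a_kk^{-s}))_{ij}=\sum_{k\leq\lfloor n/(k_ik_j)\rfloor}a_k$, specializing to $\mathcal{U}_n$ for $\zeta$ and to Theorem \ref{th41} for $1/\zeta$, is in the spirit of Cardinal's original treatment and is a worthwhile consistency check.
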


Here again one has that all the anti -diagonal entries 
$i+j=s+1$ are given by
$$
\mathcal{M}_{i, s+1- i} =  M( \lfloor \frac{n}{k_i k_{s-i+1}}\rfloor ) = M(1)=   1.
$$

\subsection{Riemann Hypothesis equivalence in terms of $\mathcal{M}_n$}\label{sec42}

The Riemann hypothesis is nicely encoded in terms of   a suitable bound on the norms 
of these matrices
$\mathcal{M}_n$.

\begin{thm} \label{th43}
The following properties are equivalent.

(i) For each $\epsilon >0$ there holds the estimate 
$$
||\mathcal{M}_{n} ||_F = O_{\epsilon} (n^{\frac{1}{2} + \epsilon}).
$$
where $||\mathcal{M}||_F^2 = \sum _{i=1}^s \sum_{j=1}^s \mathcal{M}_{ij}^2$ is the
Frobenius matrix norm.\\

(ii) The Riemann hypothesis holds.
\end{thm}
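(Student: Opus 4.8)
The plan is to reduce the Frobenius norm estimate on $\mathcal{M}_n$ to the classical statement that the Riemann hypothesis is equivalent to $M(x) = O_\epsilon(x^{1/2+\epsilon})$, using the explicit description of the entries of $\mathcal{M}_n$ from Theorem \ref{th41}. First I would write
$$
||\mathcal{M}_n||_F^2 \ = \ \sum_{i=1}^s \sum_{j=1}^s M\!\left(\Big\lfloor \frac{n}{k_i k_j}\Big\rfloor\right)^2,
$$
noting that the terms with $i+j > s+1$ vanish, so only the entries on or above the antidiagonal contribute. The key combinatorial point is that as $(i,j)$ ranges over pairs of approximate divisors with $k_i k_j \le n$, the quantity $\lfloor n/(k_i k_j)\rfloor$ ranges over the approximate divisors $\mathcal{S}_n$ as well (by Proposition \ref{pr22} applied twice, $\lfloor n/(k_i k_j)\rfloor$ is itself of the form $\lfloor n/k \rfloor$), and I would count, for each value $v \in \mathcal{S}_n$, the number of pairs $(i,j)$ with $\lfloor n/(k_i k_j)\rfloor = v$. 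This multiplicity is bounded by something like $s(n)$ (for fixed $i$, the map $j \mapsto \lfloor n/(k_ik_j)\rfloor$ takes each value a bounded number of times), so
$$
||\mathcal{M}_n||_F^2 \ \ll \ s(n) \sum_{v \in \mathcal{S}_n} M(v)^2 \ \ll \ \sqrt{n}\, \sum_{1 \le k \le n} M\!\left(\Big\lfloor \frac{n}{k}\Big\rfloor\right)^2 \Big/ (\text{something}),
$$
and conversely $||\mathcal{M}_n||_F^2 \ge \sum_{v \in \mathcal{S}_n^-} M(v)^2 = \sum_{1 \le v \le \lfloor \sqrt n\rfloor} M(v)^2$ by restricting to the first column (where $k_j = k_s = n$ gives $\lfloor n/(k_i \cdot n)\rfloor$, so better to restrict to the first row, where $k_i = 1$ and the entries are $M(\lfloor n/k_j \rfloor)$ running over all of $\mathcal{S}_n$, hence over all integers up to $\sqrt n$).

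For the implication (ii) $\Rightarrow$ (i): assuming RH, $M(x) = O_\epsilon(x^{1/2+\epsilon})$, so each of the $\approx s^2 \approx 4n$ entries of $\mathcal{M}_n$ is $O_\epsilon((n/(k_ik_j))^{1/2+\epsilon}) = O_\epsilon(n^{1/2+\epsilon})$ crudely, giving $||\mathcal{M}_n||_F^2 = O_\epsilon(n \cdot n^{1+2\epsilon}) = O_\epsilon(n^{2+2\epsilon})$, i.e. $||\mathcal{M}_n||_F = O_\epsilon(n^{1+\epsilon})$ — which is too weak. So I must be more careful and sum the bound: $||\mathcal{M}_n||_F^2 \ll \sum_{i,j:\, k_ik_j \le n} (n/(k_ik_j))^{1+2\epsilon}$, and using that for each $m \in \mathcal{S}_n$ the number of pairs with $\lfloor n/(k_ik_j)\rfloor = m$ is $O(d(\cdot))$-type but on average $O(\sqrt n / \text{(count)})$, one gets $\ll n^{1+2\epsilon}\sum_{m \le n} 1/m^{1+2\epsilon}\cdot(\text{multiplicity})$. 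The cleanest route is the multiplicity bound: the number of $(i,j)$ with $\lfloor n/(k_ik_j)\rfloor \geq n/m$ is $O(\sqrt n \cdot \sqrt m)$ or so, leading to $||\mathcal{M}_n||_F^2 \ll n^{1+\epsilon}$, hence $||\mathcal{M}_n||_F = O_\epsilon(n^{1/2+\epsilon})$. For (i) $\Rightarrow$ (ii): the first row of $\mathcal{M}_n$ has entries $M(\lfloor n/k_j\rfloor)$ for $k_j \in \mathcal{S}_n$, which includes every integer $1,2,\dots,\lfloor\sqrt n\rfloor$ exactly once; hence $\sum_{v=1}^{\lfloor\sqrt n\rfloor} M(v)^2 \le ||\mathcal{M}_n||_F^2 = O_\epsilon(n^{1+\epsilon})$, so on average $M(v)^2 = O_\epsilon(v^{1+\epsilon}\cdot v^\epsilon)$... this gives only an averaged bound. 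To upgrade to a pointwise bound $M(x) = O_\epsilon(x^{1/2+\epsilon})$ I would instead look at a single entry, or better, use that the matrix contains $M(\lfloor n/(k_ik_j)\rfloor)$ for a dense set of arguments and apply this for a sequence of $n$.

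\textbf{Main obstacle.} The delicate direction is (i) $\Rightarrow$ (ii): extracting a \emph{pointwise} bound $M(x) = O_\epsilon(x^{1/2+\epsilon})$ from a bound on the \emph{sum of squares} of Mertens values. Since $||\mathcal{M}_n||_F^2$ is a sum over $\approx 4n$ entries, a bound $||\mathcal{M}_n||_F = O(n^{1/2+\epsilon})$ only forces the entries to be $O(n^{\epsilon})$ \emph{on average in an $\ell^2$ sense}, which is far weaker than each being $O(n^{1/2+\epsilon})$ pointwise — indeed it looks at first like it should force square-root cancellation and hence imply much more than RH, or conversely be vacuous. The resolution must exploit that for a single large argument $x$, the value $M(x)$ appears as the entry $(\mathcal{M}_n)_{1,j}$ with $\lfloor n/k_j\rfloor = x$ for $n$ chosen appropriately (e.g. $n$ near $x \cdot k_j$), but it appears with multiplicity one there, drowned out by the other $\approx 4n$ terms. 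So the real content is a variance-type argument: one shows that if $M(x)$ were as large as $x^{1/2+\delta}$ for a positive-density set of $x$ (which is what a failure of RH via zeros on $\Re s = \sigma_0 > 1/2$ would produce, by standard $\Omega$-results à la Ingham), then enough entries of $\mathcal{M}_n$ are simultaneously large to violate the Frobenius bound. I expect the author invokes a known equivalent form — RH $\Leftrightarrow$ $\sum_{v \le y} M(v)^2 = O_\epsilon(y^{2+\epsilon})$ or the weighted variant $\int_1^Y M(x)^2 x^{-2}\,dx$ — together with the observation that the diagonal-type entries of $\mathcal{M}_n$ realize exactly such a weighted sum of $M$-values (since $\lfloor n/(k_ik_j)\rfloor$ tiles $[1,n]$ with controlled multiplicities), making the equivalence a clean translation rather than a new analytic input. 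Identifying and justifying that exact weighted sum identity, and checking the multiplicity bookkeeping is tight in both directions, is where the real work lies.
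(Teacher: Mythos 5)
There is a genuine gap, and it sits exactly where you located the ``main obstacle'': the direction (i) $\Rightarrow$ (ii). You never need a variance argument, an $\Omega$-result \`a la Ingham, or any equivalence of RH with a mean-square bound on $M$, because the pointwise bound is already sitting in a single entry of the matrix. Since $k_1 = 1$, the $(1,1)$ entry of $\mathcal{M}_n$ is $M(\lfloor n/(k_1k_1)\rfloor) = M(n)$, and a single entry is always dominated by the Frobenius norm: $M(n)^2 = |(\mathcal{M}_n)_{11}|^2 \le ||\mathcal{M}_n||_F^2 = O_\epsilon(n^{1+2\epsilon})$. Being ``drowned out'' by the other $\approx 4n$ terms is irrelevant, since all the other squares are nonnegative. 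Thus hypothesis (i) gives $M(n) = O_\epsilon(n^{1/2+\epsilon})$ for every $n$, which by the classical theorem (Titchmarsh, Theorem 14.25(C)) implies RH. Your worry that an $\ell^2$ bound on the whole matrix only yields averaged information is a misconception here: the hypothesis is calibrated so that one entry alone already carries the full RH-equivalent bound, and this is in fact the paper's entire proof of that direction. As written, your proposal does not establish (i) $\Rightarrow$ (ii); it defers to machinery (a weighted mean-square identity for $M$ with ``multiplicity bookkeeping'') that is neither identified precisely nor needed.

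For (ii) $\Rightarrow$ (i) you have the right idea (sum the RH bound $M(x)^2 = O_\epsilon(x^{1+\epsilon})$ over the entries), but the multiplicity estimates you invoke (``the number of pairs with $\lfloor n/(k_ik_j)\rfloor \ge n/m$ is $O(\sqrt{n}\sqrt{m})$ or so'') are left vague and are unnecessary. The paper's route is cleaner: after dropping the floor (permissible since $M$ is constant on unit intervals), the double sum factors,
\begin{equation*}
\sum_{i=1}^{s}\sum_{j=1}^{s}\Big(\frac{n}{k_ik_j}\Big)^{1+\epsilon}
\;\le\; n^{1+\epsilon}\Big(\sum_{k=1}^{\infty} k^{-(1+\epsilon)}\Big)^{2}
\;=\; O_\epsilon\big(n^{1+\epsilon}\big),
\end{equation*}
with the implied constant independent of $s(n)$, which is the only point that matters. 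If you repair the (i) $\Rightarrow$ (ii) direction via the $(1,1)$ entry and replace your multiplicity heuristics by this factorization, your argument coincides with the paper's.
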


\begin{proof}
$(i) \Rightarrow (ii). $   The hypothesis (i) yields the upper bound
$$
M(n)^2 = |\mathcal{M}_{11}|^2\le ||\mathcal{M}_n||_F^2  =  O_{\epsilon}\left( n^{1+2\epsilon} \right).
$$
This upper bound on the the Mertens function  is well-known to imply the Riemann hypothesis
\cite[Theorem 14.25(C)]{TH86}.\\

$(ii) \Rightarrow (i).$ The Mertens function RH bound implies that
\begin{eqnarray*}
 ||\mathcal{M}_n||_F^2 & = & \sum _{i=1}^s \sum_{j=1}^s \mathcal{M}_{ij}^2\\
&  \le & \sum_{i=1}^s \sum_{j=1}^s M( \frac{n}{k_ik_j})^2\\
& = & O_{\epsilon} 
\left( \sum _{i=1}^s \sum_{j=1}^s (\frac{n}{k_ik_j})^{1+ \epsilon}\right) \\
& = & O_{\epsilon}
\left(  n^{1+\epsilon}( \sum _{i=1}^{\infty} (\frac{1}{k_i})^{1+\epsilon})
(\sum_{j=1}^{\infty}(\frac{1}{k_j})^{1+ \epsilon})\right) \\
& =  &  O_{\epsilon} \left( n^{1+\epsilon}  \right).
\end{eqnarray*}
Here we used the fact that the Mertens function is constant on unit intervals to
remove the greatest integer function. The important thing is that the implied
constant in the $O$-symbol does not depend on $s=s(n)$.
\end{proof} 

Cardinal observes empirically that the function $||{\mathcal{M}}_n||$ 
(using the $l_2$ operator norm) seems to be a much
smoother function than the Mertens function $M(n)$, i.e. it empirically has smaller
fluctuations.


\begin{rem} \label{rek44a} 
The Frobenius norm $||\mathcal{M}_n||_F$
must be at least as large as its $(1,1)$-th entry.
Hence  it must see fluctuations
as large as those of $M(n)$ in the upwards direction.

The true order of growth of the Mertens function, assuming the truth of RH, is not known
at present.
 Soundararajan \cite{S09} 
 shows that the RH implies the upper bound
$$
|M(n)| = O\left( \sqrt{n} \exp( (\log x)^{1/2} (\log \log x)^{14} ) \right) 
$$
His paper suggests that assuming
conjectured bounds on the maximal size of L-functions
(made by Farmer, Gonek and Hughes)  one might be able to  derive
the stronger bound
$$
|M(n) |  =_{?} O\left( \sqrt{n} \exp\left( C(\log \log x)^{3} \right) \right).
$$
Heuristics suggest that the upper bound is much smaller.
Nathan Ng [Ng04, Theorem 1] shows, assuming unproved hypotheses, that 
$$
|M(n)| \le \sqrt{n} (\log n)^{\frac{3}{2}}.
$$
He  notes that S. Gonek has conjectured
that  the best possible maximal order of magnitude of $M(n)$ 
will be of shape
$$ |M(x)| \le B \sqrt{n} (\log\log\log n)^{5/4}, $$
where $B>0$ is a constant. 
 In the other direction it is  known
that the  GUE hypothesis implies that
$$
\limsup_{n \to \infty}  \frac{|M(n)|}{\sqrt{n}} = + \infty. 
$$
Kaczorowski \cite{Kac07} has proved a complementary result showing that
for a certain analogue  $M^{*}(n)$ of $M(n)$ the fluctuations of $M^{*}(n)$ are at
least of size $\sqrt{n} \log\log\log n$.

\end{rem}

%
%

\section{Deformed Version of  Cardinal matrix $\mathcal{U}_n$}\label{sec5}

We define and study certain deformed matrices $\widetilde{\mathcal{U}}_n$, of similar design to $\mathcal{U}_n$.
All entries of $\widetilde{\mathcal{U}}_n$ are greater than or equal to the corresponding entry of $\mathcal{U}_n$,
and any entry increase is less than one. Thus $\mathcal{U}_n$ is recoverable from  $\widetilde{\mathcal{U}}_n$
by applying the floor function to each of its entries.
These are modification of the deformation proposed in \cite{C10F}.

\subsection{Deformed Matrix $\widetilde{\mathcal{U}}_n$}\label{sec51}

We now define and study certain deformed matrices $\widetilde{\mathcal{U}}_n$, of similar design to $\mathcal{U}_n$.
All entries of $\widetilde{\mathcal{U}}_n$ are greater than or equal to the corresponding entry of $\mathcal{U}_n$,
and any increase is less than one. Thus $\mathcal{U}_n$ is recoverable from  $\widetilde{\mathcal{U}}_n$
by applying the floor function to each of its entries.

\begin{defi} \label{def51}
For $n \in \mathbb{N}$, let $\sT = \sT_s$ be the $s \times s$ matrix with $s= s (n)= \#(\mathcal{S}_n)$, 
which has  ones on and above the antidiagonal, and zeros elsewhere. 
That is, $\sT_s = (t_{ij})$, where $t_{ij} = 1$ if $i +j\leq s+1$, and 0 else. 
\[
\sT = 
 \begin{pmatrix}
  1 & 1 & \cdots & 1 & 1 \\
  1 & 1 & \cdots & 1 & 0 \\
  \vdots  & \vdots  & \ddots & \vdots &  \vdots  \\
  1 & 1 & \cdots & 0 & 0 \\
  1 & 0 & \cdots & 0 & 0
 \end{pmatrix}
\]

Its inverse 
$\sT^{-1}$ is the matrix with $1$'s on the antidiagonal and $(-1)$'s just below the antidiagonal. That is:

\[
 \sT^{-1} = 
 \begin{pmatrix}
 0  &  0& \cdots &0&  0 & 1 \\
  0 &0 &  \cdots &0 & 1 & -1 \\
  0&  0& \cdots & 1 & -1& 0\\
  \vdots  & \vdots  & \ddots & \vdots &  \vdots  \\
  0 & 1 & \cdots & 0 & 0 &0\\
  1 & -1 & \cdots & 0 & 0&0
 \end{pmatrix}.
\]
\end{defi}

\begin{defi}\label{def52}
For $n \in \mathbb{N}$, define the row vector 
$$\bd=( d_1, d_2, ..., d_s) := \sqrt{n}/k_1, \sqrt{n}/k_2, \cdots, \sqrt{n}/k_s),$$
where $k_j$ runs  through $
\mathcal{S} = \mathcal{S}_n$ in increasing order.
Let $D$ be the diagonal matrix $D = \rm{diag}(d_1,...,d_s)$, and then set
$$
\widetilde{\mathcal{U}} = \widetilde{\mathcal{U}}_n := DTD.
$$
Then set
$$\widetilde{\mathcal{M}} = \widetilde{\mathcal{M}}_n := T\widetilde{\mathcal{U}}^{-1}T.$$
\end{defi}

We note an equivalent definition of $\widetilde{\mathcal{U}}_n.$

\begin{prop}\label{prop53}
The matrix $\widetilde{\mathcal{U}}_n$ is equal to
\[
 \widetilde{\mathcal{U}}_n = 
 \begin{pmatrix}
  n/(k_1k_1) & n/(k_1k_2) & \cdots & n/(k_1k_{s-1}) & n/(k_1k_{s}) \\
  n/(k_2k_1) & n/(k_2k_2) & \cdots & n/(k_2k_{s-1}) & 0 \\
  \vdots  & \vdots  & \ddots & \vdots &  \vdots  \\
  n/(k_{s-1}k_1) & n/(k_{s-1}k_2) & \cdots & 0 & 0 \\
  n/(k_sk_1) & 0 & \cdots & 0 & 0
 \end{pmatrix}
\]
where $k_i$ runs through $\mathcal{S}$ in increasing order. That is, $\widetilde{\mathcal{U}} = (\widetilde{u}_{ij})$, where 

\[\widetilde{u}_{ij} = \left\{\begin{array}{lr}
       n/(k_ik_j) & : i +j \le s+1 \\
       0 & i +j \ge s+2
     \end{array}
   \right.
\]
\end{prop}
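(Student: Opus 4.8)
The plan is to compute the matrix product $D T D$ entrywise and match it against the claimed formula. First I would recall that $D = \mathrm{diag}(d_1,\dots,d_s)$ with $d_i = \sqrt{n}/k_i$, and that $T = (t_{ij})$ with $t_{ij} = 1$ when $i+j \le s+1$ and $t_{ij} = 0$ otherwise. Since $D$ is diagonal, the product $D T D$ is obtained from $T$ simply by scaling row $i$ by $d_i$ and column $j$ by $d_j$; that is, $(DTD)_{ij} = d_i\, t_{ij}\, d_j$. Thus $(DTD)_{ij} = (\sqrt{n}/k_i)(\sqrt{n}/k_j) = n/(k_ik_j)$ whenever $i+j \le s+1$, and $(DTD)_{ij} = 0$ whenever $i+j \ge s+2$, which is exactly the asserted piecewise description of $\widetilde{u}_{ij}$.

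Having established the entrywise formula, I would then note that this matches the displayed matrix: the $(1,1)$ entry is $n/(k_1k_1)$, the last entry of the first row is $n/(k_1k_s) = n/(1\cdot n) = 1$ (using $k_1 = 1$, $k_s = n$), the antidiagonal entries $i+j = s+1$ are $n/(k_ik_{s+1-i})$, and everything strictly below the antidiagonal (i.e. $i+j \ge s+2$) vanishes because the corresponding entry of $T$ is zero. Comparison with Lemma \ref{le22b} also confirms consistency: applying the floor function to $\widetilde{u}_{ij} = n/(k_ik_j)$ recovers $\mathcal{U}_{ij} = \lfloor n/(k_ik_j)\rfloor$, in accordance with the remark preceding the definition that $\mathcal{U}_n$ is obtained from $\widetilde{\mathcal{U}}_n$ by flooring.

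This proof is entirely routine — there is no real obstacle, since the only ingredient is the elementary fact that conjugating (here, two-sided multiplying) by a diagonal matrix rescales rows and columns. The only point requiring any care is keeping the index convention for $T$ straight (ones \emph{on and above} the antidiagonal, the condition being $i+j \le s+1$ rather than $\le s$ or $< s+1$ interpreted loosely), so that the boundary case $i+j = s+1$ lands in the nonzero branch. Once that is pinned down, the two cases of the piecewise definition follow immediately, and symmetry of $\widetilde{\mathcal{U}}_n$ is also manifest since $t_{ij} = t_{ji}$ and $d_id_j = d_jd_i$.
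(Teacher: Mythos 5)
Your proposal is correct and follows essentially the same route as the paper: the paper's proof simply writes out the product $D\sT D$ explicitly in matrix form, which is the same computation as your entrywise identity $(D\sT D)_{ij} = d_i t_{ij} d_j = n/(k_ik_j)$ for $i+j \le s+1$ and $0$ otherwise.
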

Note the  simple relationship:
 $\mathcal{U}_n$
is obtained from $\widetilde{\mathcal{U}}_n$ by taking the greatest integer part of each entry of $\widetilde{\mathcal{U}}_n$. \\

\begin{proof}
As defined, $\widetilde{\mathcal{U}} = D \sT D$, and we have
\begin{eqnarray*}
 D\sT D  &= &
 \begin{pmatrix}
  d_1 & 0 & \cdots & 0 & 0 \\
  0 & d_2 & \cdots & 0) & 0 \\
  \vdots  & \vdots  & \ddots & \vdots &  \vdots  \\
  0 & 0 & \cdots & d_{s-1} & 0 \\
  0 & 0 & \cdots & 0 & d_s
 \end{pmatrix}
  \begin{pmatrix}
  1 & 1 & \cdots & 1 & 1 \\
  1 & 1 & \cdots & 1 & 0 \\
  \vdots  & \vdots  & \ddots & \vdots &  \vdots  \\
  1 & 1 & \cdots & 0 & 0 \\
  1 & 0 & \cdots & 0 & 0
 \end{pmatrix}
  \begin{pmatrix}
  d_1 & 0 & \cdots & 0 & 0 \\
  0 & d_2 & \cdots & 0) & 0 \\
  \vdots  & \vdots  & \ddots & \vdots &  \vdots  \\
  0 & 0 & \cdots & d_{s-1} & 0 \\
  0 & 0 & \cdots & 0 & d_s
 \end{pmatrix}\\
&=  & \begin{pmatrix}
  d_1 & d_1 & \cdots & d_1 & d_1 \\
  d_2 & d_2 & \cdots & d_2 & 0 \\
  \vdots  & \vdots  & \ddots & \vdots &  \vdots  \\
  d_{s-1} & d_{s-1} & \cdots & 0 & 0 \\
  d_s & 0 & \cdots & 0 & 0
 \end{pmatrix}
  \begin{pmatrix}
  d_1 & 0 & \cdots & 0 & 0 \\
  0 & d_2 & \cdots & 0 & 0 \\
  \vdots  & \vdots  & \ddots & \vdots &  \vdots  \\
  0 & 0 & \cdots & d_{s-1} & 0 \\
  0 & 0 & \cdots & 0 & d_s
 \end{pmatrix}\\
& =  & \begin{pmatrix}
  d_1d_1 & d_1d_2 & \cdots & d_1d_{s-1} & d_1d_s \\
  d_2d_1 & d_2d_2 & \cdots & d_2d_{s-1} & 0 \\
  \vdots  & \vdots  & \ddots & \vdots &  \vdots  \\
  d_{s-1}d_1 & d_{s-1}d_1 & \cdots & 0 & 0 \\
  d_sd_1 & 0 & \cdots & 0 & 0
 \end{pmatrix}.
\end{eqnarray*}
The proposition follows from noting that the entries of this final matrix are equal to the $\widetilde{u}_{ij}$ defined in the proposition statement.
\end{proof}

\begin{exa}\label{ex54a}
{\rm
For the case $n=16$ the  matrix $\widetilde{\mathcal{U}}_{n}$  is  
(omitted entries are  $0$ )
\[ \widetilde{\mathcal{U}}_{16} =
  \begin{tabular}{ |c | c c c c c c r | }
  \hline
    $\widetilde{\mathcal{U}}_{16}$ & 1 & 2 & 3 & 4 & 5 & 8 & 16 \\
    \hline
    1  & 16   & 8   & 16/3  & 4   & 16/5  & 2 & 1 \\
    2  & 8    & 4   & 8/3   & 2   & 8/5   & 1 &   \\
    3  & 16/3 &  8/3& 16/9  & 4/3 & 16/15 &   &   \\
    4  & 4    & 2   & 4/3  & 1   &       &   &   \\
    5  & 16/5 & 8/5 & 16/15 &     &       &   &   \\
    8  & 2    & 1   &       &     &       &   &   \\
    16 & 1    &     &       &     &       &   &   \\
    \hline
  \end{tabular}
\]
}
\end{exa} 
The matrix $\widetilde{\mathcal{U}}_n$ can be viewed as a one-sided perturbation of
$\mathcal{U}_n$ in which each entry is larger by some amount between $0$ and $1$.\\

We next define a  matrix 
$\widetilde{\mathcal{U}}^{+}= \widehat{\mathcal{U}}_n^{+} := \bd^T \bd$ (outer product),
which has entries
\[
\widehat{\mathcal{U}}_{ij}^{+} =  n/(k_ik_j),   \quad 1\le i, j \le s.
\]
In the French preprint  \cite{C10F} this was Cardinal's formal definition of a 
matrix which he denoted $\tilde{\mathcal{U}}$. This
matrix has rank one, in consequence
 Cardinal's  formal definition of $\tilde{\mathcal{M}}$, which uses  $\tilde{\mathcal{U}}^{-1}$, becomes undefined.

\begin{exa}\label{ex55a}
{\rm
For the case $n=16$ the  outer product matrix $\widetilde{\mathcal{U}}_{16}^{+}= \bd^T \bd$  is:
\[ \widetilde{\mathcal{U}}_{16}^{+} =
  \begin{tabular}{| c | c c c c c c r | }
  \hline
    $\widetilde{\mathcal{U}}_{16}^{+}$ & 1 & 2 & 3 & 4 & 5 & 8 & 16 \\
    \hline
    1  & 16   & 8   & 16/3  & 4   & 16/5  & 2 & 1 \\
    2  & 8    & 4   & 8/3   & 2   & 8/5   & 1 &  1/2 \\
    3  & 16/3 &  8/3& 16/9  & 4/3 & 16/15 &  2/3 & 1/3  \\
    4  & 4    & 2   & 4/3  & 1   &  4/5     &  1/2 & 1/4  \\
    5  & 16/5 & 8/5 & 16/15 &  4/5   &  16/25     & 2/5  &  1/5 \\
    8  & 2    & 1   &    2/3   &  1/2   &  2/5     &  1/4 &  1/8 \\
    16 & 1    &  1/2   &    1/3   &   1/4  &   1/5    &  1/8 & 1/16\\
    \hline
  \end{tabular}
\]
This matrix has rank one, and agrees with $\widetilde{\mathcal{U}}_{16}$ on and 
above the antidiagonal. 
The difference matrix  is:
\[ \widetilde{\mathcal{U}}_{16}^{+}-   \widetilde{\mathcal{U}}_{16} =
 \begin{tabular}{ |c | c c c c c c r | }
 \hline
    $\widetilde{\mathcal{U}}_{16}^{+}-   \widetilde{\mathcal{U}}_{16}$ & 1 & 2 & 3 & 4 & 5 & 8 & 16 \\
    \hline
    1  &    &    &  &    &  &  & 0 \\
    2  &     &   &    &    &   & 0 &  1/2 \\
    3  &  &  &   &  & 0&  2/3 & 1/3  \\
    4  &    &   &   & 0   &  4/5     &  1/2 & 1/4  \\
    5  &  & & 0&  4/5   &  16/25     & 2/5  &  1/5 \\
    8  &     & 0   &    2/3   &  1/2   &  2/5     &  1/4 &  1/8 \\
    16 & 0    &  1/2   &    1/3   &   1/4  &   1/5    &  1/8 & 1/16\\
    \hline
  \end{tabular}
\]
}
\end{exa}

The matrix $\widetilde{\mathcal{U}}_n$ can be viewed as a one-sided perturbation of
$\mathcal{U}_n$ in which each entry is larger by some amount between $0$ and $1$.

\begin{rem}\label{rmk56}
(1) On can recover the Cardinal matrix $\mathcal{U}_n$ from either of 
the matrices $\widetilde{\mathcal{U}_n^{+}}$ and $\widetilde{\mathcal{U}}_n$
 by applying the floor function to each entry.
Here  $\widetilde{\mathcal{U}}_n$ is also obtained by applying the floor
function to some entries of $\widetilde{\mathcal{U}_n^{+}}$, namely those 
 entries
of $\widetilde{\mathcal{U}_n^{+}}$that are  strictly
below the antidiagonal (the lower right corner), and 
whose effect is to  zero out all of these entries. 
 As a consequence $\widetilde{\mathcal{U}}$  is an invertible matrix,
 having determinant the product of the entries on the anti-diagonal, times $(-1)^{\frac{n(n-1)}{2}}$.
 Furthermore all entries on the antidiagonal are $\ge 1$, so that $|\det (\widetilde{\mathcal{U}})| \ge 1$.

(2) The structure of ${\mathcal{U}_n}^{-1}$ is similar in shape to that of $T^{-1}$. Namely
it is supported on the antidiagonal and the parallel antidiagonal under it, see the next example.
\end{rem} 

\begin{exa}\label{ex57}
{\rm 
For the case $n=16$ the  matrix $\widetilde{\mathcal{U}}_{n}^{-1}$  is  
(omitted entries are $0$)
\[ \widetilde{\mathcal{U}}_{16} =
  \begin{tabular}{ | c | c c c c c c r | }
  \hline
    $\widetilde{\mathcal{U}}_{16}$ & 1 & 2 & 3 & 4 & 5 & 8 & 16 \\
    \hline
    1  &    &    &   &    &   &  & 1 \\
    2  &    &   &   &   &    & 1 &  -1/2 \\
    3  &  &  &   &  & 15/16 & -3/2  &   \\
    4  &     &   &   & 1   &    -5/4   &   &   \\
    5  &  &  & 15/16 & -5/4    &       &   &   \\
    8  &     & 1   &   -3/2    &     &       &   &   \\
    16 & 1    & -1/2    &       &     &       &   &   \\
    \hline
  \end{tabular}
\]
}
\end{exa}

%
%

\section{ Deformed  Matrix $\widetilde{\mathcal{M}}_n$}\label{sec6}
 In  this section we show  the  proof idea of Cardinal on the deformed matrix 
given in \cite{C10F} can be adjusted to 
  work using the modified definition of  $\tilde{\mathcal{U}}_n$ given above.
 The argument given follows \cite{C10F} closely, but uses the Frobenius norm
 instead of the $\ell_2$ operator norm. \\
   
%
%

 \subsection{Deformed matrix $\widetilde{\mathcal{M}}_n$}

We define the  {\em deformed Cardinal Mertens  matrix} $\widetilde{\mathcal{M}}_n$
by following the same recipe for $\mathcal{M}$, namely
$$\widetilde{\mathcal{M}}_n := T (\widetilde{\mathcal{U}}_n)^{-1}.$$

\begin{prop}\label{prop58} 
For the nonnegative symmetric $s \times s$ real matrices $\sT_n, \mathcal{U}_n, \widetilde{\mathcal{U}}_n$
the following hold.\\
\ben
\item $\sT \leq_{P} \mathcal{U} \leq_{P}  \widetilde{\mathcal{U}}$, where $\leq_{P}$  represents the inequality of corresponding entries. \\

\item For the values of $\mathcal{U}$ in the upper-left corner of the matrix, we have 
$\mathcal{U} \simeq \widetilde{\mathcal{U}}$,
in the sense that for $i+j \le s+1$, 
$$0 \le \widetilde{\mathcal{U}}_{ij} - \mathcal{U}_{ij} \le 1 = T_{ij} \le \mathcal{U}_{ij}.$$ 

\item For the values of $\mathcal{U}$ on the antidiagonal $i+j= s+1$, we have $\mathcal{U}_{ij} = \sT_{ij}=1$. \\
\een
\end{prop}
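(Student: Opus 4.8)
The plan is to verify each of the three claims directly from the explicit entry formulas already established, since Proposition \ref{prop58} is essentially a bookkeeping statement about three matrices whose entries we know in closed form: $\sT_{ij} = 1$ if $i+j \le s+1$ and $0$ otherwise (Definition \ref{def51}); $\mathcal{U}_{ij} = \lfloor n/(k_ik_j)\rfloor$ for $i+j \le s+1$ and $0$ otherwise (Lemma \ref{le22b}); and $\widetilde{\mathcal{U}}_{ij} = n/(k_ik_j)$ for $i+j \le s+1$ and $0$ otherwise (Proposition \ref{prop53}).

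For part (3), I would observe that for $i+j = s+1$ the involution of Lemma \ref{le22} gives $k_{s+1-i} = \widehat{k_i} = \lfloor n/k_i\rfloor$, so by Proposition \ref{pr22} we get $\lfloor n/(k_i k_{s+1-i})\rfloor = \lfloor \tfrac{1}{k_i}\lfloor n/k_i\rfloor \rfloor = \lfloor n/(k_i\cdot\lfloor n/k_i\rfloor)\rfloor$. Since $k_i \le \sqrt{n}$ or $k_i > \sqrt{n}$, in either case $k_i \cdot \lfloor n/k_i\rfloor \le n < k_i(\lfloor n/k_i\rfloor + 1)$, forcing the floor to equal $1$; and $\sT_{i,s+1-i} = 1$ since $i + (s+1-i) = s+1 \le s+1$. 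This is exactly the computation already recorded in the sentence preceding Example \ref{ex33a}, so it can be cited rather than repeated. For part (1), the inequality $\sT \le_P \mathcal{U}$ splits into two cases: below the antidiagonal ($i+j \ge s+2$) both entries are $0$; on or above it ($i+j \le s+1$) we have $\sT_{ij} = 1$ and $\mathcal{U}_{ij} = \lfloor n/(k_ik_j)\rfloor \ge 1$, the last inequality because $k_i k_j \le k_i k_{s+1-i} \cdot (k_j/k_{s+1-i})$ — more cleanly, because $k_j \le k_{s+1-i} = \lfloor n/k_i\rfloor$ when $j \le s+1-i$, giving $k_i k_j \le k_i \lfloor n/k_i\rfloor \le n$, hence $\lfloor n/(k_ik_j)\rfloor \ge 1$. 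The inequality $\mathcal{U} \le_P \widetilde{\mathcal{U}}$ is immediate since $\lfloor x\rfloor \le x$ entrywise and both vanish below the antidiagonal.

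For part (2), fix $i+j \le s+1$. The bound $0 \le \widetilde{\mathcal{U}}_{ij} - \mathcal{U}_{ij}$ is part (1); the bound $\widetilde{\mathcal{U}}_{ij} - \mathcal{U}_{ij} = n/(k_ik_j) - \lfloor n/(k_ik_j)\rfloor < 1$ is the defining property of the fractional part, and in fact $\le 1$ with equality impossible, so certainly $\le 1 = \sT_{ij}$; and $\sT_{ij} = 1 \le \mathcal{U}_{ij}$ is the nontrivial inequality from part (1). So part (2) is just a repackaging of parts (1) and (3) together with the elementary fact $x - \lfloor x\rfloor < 1$.

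None of the three steps presents a genuine obstacle — the only point requiring an actual arithmetic argument is the lower bound $\mathcal{U}_{ij} \ge 1$ on and above the antidiagonal, which reduces via the monotonicity $k_j \le k_{s+1-i}$ (valid exactly when $j \le s+1-i$) and the identity $k_{s+1-i} = \lfloor n/k_i\rfloor$ to the observation $k_i \lfloor n/k_i\rfloor \le n$. I would present the proof by first disposing of the below-antidiagonal entries (all three matrices vanish there), then establishing $\mathcal{U}_{ij} \ge 1$ for $i+j \le s+1$ as the single lemma-like step, and finally reading off (1), (2), (3) as corollaries, citing Proposition \ref{pr22} and Lemma \ref{le22} for the antidiagonal value.
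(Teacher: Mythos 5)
Your proposal is correct and is essentially the paper's argument: the paper disposes of Proposition \ref{prop58} with ``the properties (1)--(3) follow by inspection,'' and your entrywise verification (vanishing below the antidiagonal, the key bound $k_ik_j\le k_i\lfloor n/k_i\rfloor\le n$ giving $\mathcal{U}_{ij}\ge 1$ for $i+j\le s+1$, the fractional-part bound, and the antidiagonal computation already recorded before Example \ref{ex33a}) is exactly the inspection being invoked. The only blemish is the intermediate expression $\lfloor \tfrac{1}{k_i}\lfloor n/k_i\rfloor\rfloor$ in your part (3) chain, which should involve division by $k_{s+1-i}=\lfloor n/k_i\rfloor$ rather than $k_i$; your subsequent direct inequality $k_i\lfloor n/k_i\rfloor\le n< 2k_i\lfloor n/k_i\rfloor$ makes the conclusion correct regardless.
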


\begin{proof}
The properties (1)-(3) follow by inspection.
\end{proof}

%
%

\subsection{Bound for norm of deformed matrix $||\widetilde{\mathcal{M}}_n||_F$}\label{sec62}

Our  main observation  is that the norm of $||\widetilde{\mathcal{M}}_n||_F$ can be
unconditionally  estimated, and it satisfies the Riemann hypothesis bound.

To  estimate the growth of $||\widetilde{\mathcal{M}}_n||_F$, we first need a bound for $||\sT_n||_F.$\\

\begin{prop} \label{prop59}  The matrix $T= T_{s(n)}$ satisfies the Frobenius norm bound
\[ ||\sT_n||_F = O(\sqrt{n}), \]
\end{prop}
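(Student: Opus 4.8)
The plan is to compute $\|\sT_n\|_F^2$ essentially exactly and then estimate the resulting quantity. Recall that $\sT_n = (t_{ij})$ with $t_{ij}=1$ when $i+j \le s+1$ and $0$ otherwise, where $s = s(n)$. Since every nonzero entry equals $1$, the Frobenius norm squared is simply the number of nonzero entries:
\[
\|\sT_n\|_F^2 = \#\{(i,j): 1 \le i,j \le s,\ i+j \le s+1\}.
\]
First I would count these pairs directly. For each fixed $i$ with $1 \le i \le s$, the number of admissible $j$ is $\#\{j: 1 \le j \le s+1-i\} = s+1-i$ (this is at least $1$ since $i \le s$). Summing over $i$ gives
\[
\|\sT_n\|_F^2 = \sum_{i=1}^{s} (s+1-i) = \sum_{m=1}^{s} m = \frac{s(s+1)}{2}.
\]

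Next I would invoke Definition \ref{de21}, which gives $s = s(n) = 2\lfloor \sqrt{n}\rfloor$ or $2\lfloor\sqrt n\rfloor - 1$; in either case $s \le 2\sqrt{n}$. Therefore
\[
\|\sT_n\|_F^2 = \frac{s(s+1)}{2} \le \frac{(2\sqrt n)(2\sqrt n + 1)}{2} = 2n + \sqrt n = O(n),
\]
and taking square roots yields $\|\sT_n\|_F = O(\sqrt n)$, as claimed.

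I do not expect any serious obstacle here: the statement is elementary once one observes that $\sT_n$ is a $0$–$1$ matrix, so the Frobenius norm squared just counts nonzero entries, and the count is a triangular number controlled by $s(n) \asymp \sqrt n$. The only minor point to be careful about is handling both possible values of $s(n)$ uniformly, which is absorbed painlessly into the bound $s \le 2\sqrt n$. One could even record the sharper asymptotic $\|\sT_n\|_F^2 = \tfrac12 s(s+1) \sim 2n$, but the stated $O(\sqrt n)$ bound is all that is needed for the subsequent estimate of $\|\widetilde{\mathcal M}_n\|_F$.
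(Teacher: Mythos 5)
Your proof is correct and follows essentially the same elementary route as the paper: both arguments reduce to the fact that every entry of $\sT_n$ is $0$ or $1$ and that $s(n) \le 2\sqrt{n}$, the paper via the generic bound $\|A\|_F \le s \max|A|$ and you via an exact count of the $\tfrac{1}{2}s(s+1)$ nonzero entries. Your version merely yields the slightly sharper constant $\|\sT_n\|_F^2 \sim 2n$ in place of the paper's cruder $\le s^2 \sim 4n$, which makes no difference for the stated $O(\sqrt{n})$ bound.
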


\begin{proof}
For a $s \times s$ matrix $A$, let $\max |A| = \max |a_{ij}|$. We use the general bound 
$$||A||_F^2  = \sum_{i=1}^s\sum_{j=1}^s |A_{ij}|^2 \le s^2 (\max |A|)^2, $$
which gives
$$||A||_F \le s \max|A|.$$
 Here, applied to  $A = \sT_n$, we have $\max |A| = 1$, and $s = \#S \sim 2\sqrt{n}$.
\end{proof}

\begin{rem}\label{rmk47}
It is clear that $||\sT_n||_F \ge \sqrt{n},$ since $\sT_n$ has $\sim \frac{1}{2} (2\sqrt{n})^2$ entries equal to $1$.
In terms of the $l_2$-norm, it is also
easy to see that $\liminf ||\sT_n||/\sqrt{n} \geq 1$. Indeed, let us consider the column vector $w$ of size $\#S$ made up of 1s, and denote its transpose by $w'$. Because of the fact that $\#S \simeq 2\sqrt{n}$
we have $||w||^2 \simeq 2\sqrt{n}$ and also $w' \sT w \simeq 2n$. The spectral radius of 
the symmetric matrix $\sT_n$, which is also the $l_2$-norm of $\sT_n$, is thus greater than $\frac{w'Tw}{||w||^2} \simeq \sqrt{n}$.
\end{rem}

Now we can bound the size of the perturbed inverse matrix $\widetilde{\mathcal{M}}_n$ elementwise, as follows.

%
\begin{lem}\label{lem511} The deformed matrix $\widetilde{\mathcal{M}}_n$ has elements bounded by
\[ \max_{i,j}  |(\widetilde{\mathcal{M}}_n)_{i,j}| = O(\log n). \]
\end{lem}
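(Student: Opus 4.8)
The plan is to compute $\widetilde{\mathcal{M}}_n = T(\widetilde{\mathcal{U}}_n)^{-1}$ explicitly enough to read off the size of its entries, exploiting the near-triangular structure. First I would invert $\widetilde{\mathcal{U}}_n = DTD$ directly: since $D$ is diagonal with entries $d_i = \sqrt{n}/k_i$, we have $(\widetilde{\mathcal{U}}_n)^{-1} = D^{-1}T^{-1}D^{-1}$, and from Definition~\ref{def51} the matrix $T^{-1}$ is supported only on the antidiagonal (entries $1$) and the diagonal immediately below it (entries $-1$). Hence $(\widetilde{\mathcal{U}}_n)^{-1}$ has at most two nonzero entries per row: in row $i$ a value $d_i^{-1}d_{s+1-i}^{-1} = k_i k_{s+1-i}/n$ in column $s+1-i$, and (for $i \ge 2$) a value $-d_i^{-1}d_{s+2-i}^{-1} = -k_i k_{s+2-i}/n$ in column $s+2-i$. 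The key observation is that the product $k_i k_{s+1-i}$ is, by Lemma~\ref{le22b}, essentially $n$ (indeed $\lfloor n/(k_i k_{s+1-i})\rfloor = 1$, so $k_i k_{s+1-i} \le 2n$ roughly), while $k_i k_{s+2-i}$ is of comparable size; so every nonzero entry of $(\widetilde{\mathcal{U}}_n)^{-1}$ is $O(1)$.

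Next I would multiply on the left by $T$. Since $(T A)_{ij} = \sum_{\ell : \ell \le s+1-i} A_{\ell j}$, each entry of $\widetilde{\mathcal{M}}_n = T(\widetilde{\mathcal{U}}_n)^{-1}$ is a partial column sum of $(\widetilde{\mathcal{U}}_n)^{-1}$. Because column $j$ of $(\widetilde{\mathcal{U}}_n)^{-1}$ has only two nonzero entries (from the antidiagonal contribution at row $s+1-j$ and the sub-antidiagonal contribution at row $s+2-j$), this partial sum telescopes: for each $(i,j)$ it is either $0$, a single entry $\pm k_{\cdot}k_{\cdot}/n$, or the difference of two consecutive such entries. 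So a priori $|\widetilde{\mathcal{M}}_{ij}| = O(1)$ already — but I need to double-check the boundary rows and be careful about exactly which partial sums pick up one versus two terms, since the indices $s+1-j$ and $s+2-j$ straddle the cutoff $s+1-i$ precisely when $i \in \{j, j-1\}$.

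The only place a genuine $\log n$ can enter is if the two-nonzero-entries-per-column structure is not quite exact at the edges, or more plausibly if I have mislocated where the floor in the *undeformed* analysis forces a correction — but here $\widetilde{\mathcal{U}}_n$ is the clean (un-floored) object, so its inverse really is this sparse. I expect the honest statement is that each entry is $O(1)$ and that the $\log n$ in the lemma is a deliberately lossy bound chosen because it is all that is needed downstream (for Corollary~\ref{cor512}, where it gets multiplied by $\|T\|_F^2 = O(n)$ and one wants room to spare); following \cite{C10F} the cleanest route is simply to bound $\max_{i,j}|(\widetilde{\mathcal{U}}_n)^{-1}_{ij}| = O(1)$ using $k_i k_{s+1-i} \asymp n$, then note $\widetilde{\mathcal{M}}_{ij}$ is a sum of at most two such terms, giving $O(1) \le O(\log n)$. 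The main obstacle — really a bookkeeping nuisance rather than a difficulty — is verifying the bound $k_i k_{s+1-i} = O(n)$ uniformly in $i$ and handling the first and last rows/columns of $T^{-1}$ and $T$ so that no partial sum secretly accumulates more than two nonzero terms.
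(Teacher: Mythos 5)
There is a genuine gap: you have analyzed the wrong matrix. The lemma is about $\widetilde{\mathcal{M}}_n = T\,(\widetilde{\mathcal{U}}_n)^{-1}\,T$, multiplied by $T$ on \emph{both} sides (Definition~\ref{def52}; the single-$T$ formula at the start of Section~\ref{sec6} is a slip, as the analogy with $\mathcal{M}_n = T\,\mathcal{U}_n^{-1}\,T$ and the use made of the lemma show). Your first half is fine and matches the paper: $(\widetilde{\mathcal{U}}_n)^{-1} = D^{-1}T^{-1}D^{-1}$ is supported on the antidiagonal and the diagonal just below it, with antidiagonal entries $k\overline{k}/n \in (0,1]$ (where $\overline{k}=\lfloor n/k\rfloor$) and sub-antidiagonal entries $-\overline{k}k^{+}/n$ bounded below by $-k^{+}/k \ge -C$. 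But with $T$ on both sides the $(i,j)$ entry of $\widetilde{\mathcal{M}}_n$ is the sum of \emph{all} entries of the top-left $(s+1-i)\times(s+1-j)$ block of $(\widetilde{\mathcal{U}}_n)^{-1}$, not a partial column sum; such a block meets the antidiagonal and sub-antidiagonal in up to $\sim s$ positions, so the "at most two terms per entry" count, and hence your $O(1)$ conclusion, fails for the matrix actually being bounded.

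The cancellation you expected does occur, but only pairwise: each antidiagonal term $\overline{k}k/n$ pairs with the adjacent sub-antidiagonal term $-\overline{k}k^{+}/n$ to give $\overline{k}(k-k^{+})/n = -\overline{k}/n$ when $k<\sqrt{n}$ (so $k^{+}=k+1$), plus $O(1)$ from a few unpaired terms near the center. Summing these leftovers over $k\le\sqrt{n}$ gives
\[
\frac{1}{n}\sum_{1\le k\le \sqrt{n}}\Big\lfloor \frac{n}{k}\Big\rfloor \;\le\; \sum_{1\le k\le\sqrt{n}}\frac{1}{k} \;\sim\; \log\sqrt{n},
\]
which is exactly where the $\log n$ in the lemma comes from. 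In particular the $O(\log n)$ is not a "deliberately lossy" bound: the entries of $\widetilde{\mathcal{M}}_n$ near the upper-left corner genuinely grow like $\log n$ (see Remark~\ref{rek514}, where it is noted that $\max |\widetilde{\mathcal{M}}_n|/\log n$ has a strictly positive limit). To repair your argument, keep your computation of $(\widetilde{\mathcal{U}}_n)^{-1}$, but replace the one-sided partial column sums by the rectangular block sums $c_{ij}=\sum_{k\le s+1-i}\sum_{\ell\le s+1-j}a_{k\ell}$ and carry out the telescoping just described.
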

\begin{proof}
As $\widetilde{\mathcal{U}}_n$ is defined to be $D\sT D$, we have $(\widetilde{\mathcal{U}}_n)^{-1} = D^{-1} \sT^{-1}D^{-1}$. We now calculate $\sT^{-1}$, $D^{-1}$, and $(\widetilde{\mathcal{U}}_n)^{-1}$.

\ben
\item $\sT^{-1}$ is the matrix with $1$'s on the antidiagonal and $(-1)$'s just below the antidiagonal. That is:

\[
 \sT^{-1} = 
 \begin{pmatrix}
   &  &&  &&  & 1 \\
   & & & & & 1 & -1 \\
  &  & & & 1 & -1& \\
    &  & &\ddots &  &    \\
    &  &1& &  &  &\\
   
   & 1 & -1& &  &  &\\
  1 & -1 & & &  & &
 \end{pmatrix}.
\]

\item $D^{-1} = \rm{diag}(1/\sqrt{n},...,k/\sqrt{n},...,n/\sqrt{n})$, where $k$ runs through $\mathcal{S}$. \\

\item $(\widetilde{\mathcal{U}}_n)^{-1}$ has the same shape as $\sT^{-1}$ in that the only nonzero entries are on and just below the antidiagonal. 

Traversing the antidiagonal of $(\widetilde{\mathcal{U}}_n)^{-1}$ from the bottom left to the top right, we get the terms $\frac{k\overline{k}}{n}$, where $k$ runs through $\mathcal{S}$. As $\overline{k} = [n/k]$, it follows that each one of these terms lies between 0 and 1. 

Traversing just under the antidiagonal, we have the terms $-\frac{\overline{k}k^+}{n}$, where $k$ runs through 
$\mathcal{S} \setminus \{n\}$ and $k^+$ designates the successor of $k$ in $\mathcal{S}$. 
As we can bound $\frac{\overline{k}k^+}{n}$ by $\frac{(n/k)k^+}{n} = \frac{k^+}{k} \leq C$ 
for some constant $C$ (as shown in point 3 of Proposition 2.4 of \cite{C10F}, 
we can take $C = 4 + 2\sqrt{2}$). Thus each of these terms thus lies between $-C$ and 0.
\een

Now, we turn to $\widetilde{\mathcal{M}}_n = \sT(\mathcal{U}_n)^{-1}\sT$. 
Upon multiplying a general $s \times s$ matrix $A = (a_{ij})$ on both sides by $\sT$, 
we see that the resulting matrix takes the form $\sT A \sT = C = (c_{ij})$, where 
\[ c_{ij} = \sum_{k = 1}^{s+1-i}\sum_{\ell = 1}^{s+1-j}a_{k\ell}. \]

Thus, we see that obtaining the $(i,j)$-th  term of $\widetilde{\mathcal{M}}_n = \sT (\widetilde{\mathcal{U}}_n)^{-1} \sT$
 consists of summing all the terms within the $(s+1-i) \times (s+1-j)$ submatrix starting located 
 at the top left of $(\widetilde{\mathcal{U}}_n)^{-1}$.
  Considering the form of $(\widetilde{\mathcal{U}}_n)^{-1}$ -- the only nonzero entries 
  are on and just below the antidiagonal -- and the fact that the matrix is symmetric, 
  we conclude that each coefficient of $\widetilde{\mathcal{M}}_n$ is the sum of at most:

\bi

\item two sums of terms along the antidiagonal, each of the form (the two sums are the
 same as the matrix is symmetric) 
 $\frac{1}{n}\sum_{k \in \mathcal{S}, i \leq k \leq j}\overline{k}(k-k^+)$, 
 where $i$ and $j$ are fixed in $\mathcal{S}$, and $j < \sqrt{n}$ \\

\item and a sum of at most three terms at the center of the matrix, 
each of which is between $-C$ and $1$.

\ei

Now, since we have $k < \sqrt{n}$ in the above sums, we have that $k^+ = k+1$
and so each of the two sums mentioned above simplifies to 
$-\frac{1}{n}\left( \sum_{k \in \mathcal{S}, i \leq k \leq j}\overline{k}\right)$. 
Now, we have the bound

\[ \sum_{k \in \mathcal{S}, i \leq k \leq j}\overline{k} 
\leq \sum_{1 \leq k \leq \sqrt{n}}\overline{k} =
 \sum_{1 \leq k \leq \sqrt{n}}[n/k] \leq \sum_{1 \leq k \leq \sqrt{n}}n/k \sim n \log \sqrt{n}, \]

and so we obtain

\[ \max |\widetilde{\mathcal{M}}_n| = O(\log n). \]

\end{proof}

Lemma \ref{lem511} immediately yields a bound for the 
Frobenius matrix norm $ ||\widetilde{\mathcal{M}}_n||_F$.

\begin{cor} \label{cor512} The Frobenius norm of $\widetilde{\mathcal{M}}_n$ satisfies
\[ ||\widetilde{\mathcal{M}}_n||_F = O(\sqrt{n}\log n). \]
\end{cor}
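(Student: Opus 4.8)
The plan is to combine the entrywise bound of Lemma~\ref{lem511} with the crude passage from sup-norm to Frobenius norm already used in the proof of Proposition~\ref{prop59}. First I would recall that for any $s \times s$ real matrix $A$ one has
\[
\|A\|_F^2 = \sum_{i=1}^s \sum_{j=1}^s |A_{ij}|^2 \le s^2 \Big(\max_{i,j} |A_{ij}|\Big)^2,
\]
and hence $\|A\|_F \le s\,\max_{i,j}|A_{ij}|$. This is precisely the inequality invoked for $A = \sT_n$ in Proposition~\ref{prop59}; nothing about it is special to $\sT_n$, so it applies verbatim to $A = \widetilde{\mathcal{M}}_n$.

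Next I would insert the two ingredients. Lemma~\ref{lem511} gives $\max_{i,j}|(\widetilde{\mathcal{M}}_n)_{i,j}| = O(\log n)$, and by Definition~\ref{de21} the matrix has size $s = s(n) = \#(\mathcal{S}_n) \in \{2\lfloor \sqrt n\rfloor,\ 2\lfloor \sqrt n\rfloor - 1\}$, so in particular $s = O(\sqrt n)$. Multiplying,
\[
\|\widetilde{\mathcal{M}}_n\|_F \ \le\ s\cdot \max_{i,j}|(\widetilde{\mathcal{M}}_n)_{i,j}| \ =\ O(\sqrt n)\cdot O(\log n) \ =\ O(\sqrt n \log n),
\]
which is the assertion of Corollary~\ref{cor512}. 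The implied constant depends only on the implied constants in Lemma~\ref{lem511} and in $s(n) \le 2\sqrt n$, neither of which grows with $n$, so the bound is uniform in $n$.

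I do not expect a genuine obstacle at this stage: all the real work has already been done in Lemma~\ref{lem511}, where one computes the shape of $(\widetilde{\mathcal{U}}_n)^{-1} = D^{-1}\sT^{-1}D^{-1}$, identifies which entries of $\sT(\widetilde{\mathcal{U}}_n)^{-1}\sT$ actually receive contributions, and bounds the resulting partial sums $\tfrac1n\sum_{k}\overline{k}$ against $\sum_{1\le k\le\sqrt n} n/k \sim n\log\sqrt n$. The only point needing care is a trivial bookkeeping one — confirming that the multiplier $s \sim 2\sqrt n$ in $\|A\|_F \le s\max|A|$ is indeed the full dimension of the matrix, which is immediate here because Lemma~\ref{lem511} bounds \emph{every} entry uniformly rather than, say, the number of nonzero entries per row. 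Consequently I would present the proof as essentially a one-line combination of Lemma~\ref{lem511} with the elementary norm inequality, with a sentence noting the uniformity of the implied constant.
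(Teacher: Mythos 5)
Your proof is correct and is essentially identical to the paper's: the paper likewise obtains Corollary~\ref{cor512} by combining the entrywise bound $\max_{i,j}|(\widetilde{\mathcal{M}}_n)_{i,j}| = O(\log n)$ of Lemma~\ref{lem511} with the inequality $\|A\|_F \le s\max|A|$ and $s = s(n) = O(\sqrt{n})$. No further comment is needed.
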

\begin{proof}
Use Lemma \ref{lem511} along with the inequality used in 
Proposition \ref{prop58}, which is $||A||_F \leq s \max |A|$.
\end{proof}


\begin{rem} \label{rek513a} The  Frobenius norm upper bound $O (\sqrt{n}\log n)$ for $\widetilde{\mathcal{M}}_n$
is better   than the known $\Omega$-bounds
on the fluctuations of the Mertens's function, assuming RH. See Remark \ref{rek44a}.
\end{rem}

\begin{rem}\label{rek514}
Cardinal \cite{C10E} reports 
that  numerical experiments with the $l_2$-operator norm suggest that  the ratio $||\widetilde{\mathcal{M}}_n||/(\sqrt{n}\log n)$
 seems to tend to 0. 
He also remarks  it is easy to prove that $\max |\widetilde{\mathcal{M}}_n|/\log n$
 has a strictly positive limit (a result more precise than Lemma \ref{lem511}). It appears that the inequality
 $||A|| \leq s \max |A|$ gives away too much to deduce such a result.
\end{rem}


\section{ Remarks on  Bounding  Cardinal's matrix  $||{\mathcal{M}}_n||_F$}\label{sec7}

We would like to obtain upper bounds for   $||{\mathcal{M}}_n||_F$, perhaps viewing ${\mathcal{M}}_n$
as a perturbation of $\widetilde{\mathcal{M}}_n$. 
One approach is  first study  its inverse $\, \mathcal{U}_n$ compared to
$\, \widetilde{\mathcal{U}}_n$.  The basic quantity  controlling the
size of $||{\mathcal{M}}_n||_F$ will be the size of the {\em smallest} eigenvalue of $\mathcal{U}_n$.
This is because $\mathcal{M}_n$ is  a fixed linear of the matrix $ T(\mathcal{U}_n)^{-1}$
and the largest norm eigenvalue of the symmetric matrix $(\mathcal{U}_n)^{-1}$ is 
the reciprocal of the smallest norm (nonzero) eigenvalue of $\mathcal{U}_n)$.
Note that $\det(\mathcal{U}_n) =1$ and $\det( \widetilde{\mathcal{U}}_n) \ge 1$.\\

\subsection{Positivity property}\label{sec71}

 Each  of
$\mathcal{U}_n$ and $\, \widetilde{\mathcal{U}}_n$ and, especially,
their difference matrix
$$
E_n := \widetilde{\mathcal{U}}_n -\mathcal{U}_n
$$
are {\em nonnegative} symmetric matrices. Perhaps the nonnegativity
constraint can be put to some use. Note that the
perturbation $E_n$ has $\det(E_n) =0$.\\

\begin{exa}\label{ex61a}
{\rm 
For the case $n=16$ the  matrix $E_n$  is  
(omitted entries are  $0$)
\[ E_{16} =
  \begin{tabular}{ |c| c c c c c c r | }
  \hline
    $E_{16}$ & 1 & 2 & 3 & 4 & 5 & 8 & 16 \\
        \hline
    1  & 0   & 0   & 1/3  & 0  & 1/5  & 0 & 0 \\
    2  & 0   & 0   & 2/3   & 0   & 3/5   & 0 &  \\
    3  & 1/3 &  2/3& 7/9  & 1/3 & 1/15 &  &  \\
    4  & 0    & 0   & 1/3  & 0   &      &    &   \\
    5  & 1/5 & 3/5 & 1/15 &     &       &   &   \\
    8  & 0    & 0   &    &     &       &   &   \\
    16 & 0    &     &     &    &      &  &   \\
    \hline

    \end{tabular}
\]
}
\end{exa}

Next we note that Cardinal's original deformed
matrix $\widetilde{\mathcal{U}}_n^{+}$ is given by the outer product
$$
\widetilde{\mathcal{U}}_n^{+} := [d_1, d_2, \cdots , d_s] ^T [d_1 d_2, ..., d_s] = [d_i d_j]_{1 \le i, j \le s}\,
$$
which shows it is  a rank one matrix, hence non-invertible.  It satisfies 
$$
\widetilde{\mathcal{U}}_n^{+} \ge_{N}  \widetilde{\mathcal{U}}_n \ge_{N} 
\mathcal{U}_n.
$$
where $A \ge_{N} B$ means $A- B$ is a nonnegative (symmetric) matrix.
It follows that the larger difference matrix
$$
\widetilde{E}_n := \widetilde{\mathcal{U}}_n^{+} -\mathcal{U}_n
$$
exhibits  a sufficiently large nonnegative perturbation to reach a very singular matrix
$\widetilde{\mathcal{U}}_n^{+}$ from $\mathcal{U}_n$.\\

\begin{exa}\label{ex62a}
{\rm
For the case $n=16$ the  larger difference matrix $\widetilde{E}_n$  is  
\[ \widetilde{E}_{16} =
  \begin{tabular}{ | c | c c c c c c r | }
  \hline
    $\widetilde{E}_{16}$ & 1 & 2 & 3 & 4 & 5 & 8 & 16 \\
    \hline
    1  & 0   & 0   & 1/3  & 0  & 1/5  & 0 & 0 \\
    2  & 0   & 0   & 2/3   & 0   & 3/5   & 0 & 1/2  \\
    3  & 1/3 &  2/3& 7/9  & 1/3 & 1/15 & 2/3  &  1/3 \\
    4  & 0    & 0   & 1/3  & 0   &    4/5   & 1/2   &  1/4 \\
    5  & 1/5 & 3/5 & 1/15 &  4/5   &   16/25    &  2/5 &  1/5 \\
    8  & 0    & 0   &   2/3    &  1/2   &  2/5     & 1/4  & 1/8  \\
    16 & 0    & 1/2    &   1/3    &  1/4   &   1/5    &  1/8 & 1/16  \\
    \hline
  \end{tabular}
\]
}
\end{exa}

This suggests that it will be hard to bound the smallest eigenvalue
of $\mathcal{U}_n$ by a general matrix inequality. \\

It may also be useful to  study  the difference matrix
$$
E_n^{+} :=\widetilde{\mathcal{U}}_n^{+}  -\widetilde{\mathcal{U}}_n.
$$
Here on has
$$
\tilde{E}_n = E_n^{+} + E_n.
$$

\begin{exa}\label{ex63}
{\rm
For the case $n=16$ the   difference matrix $E_n^{+}$  is  
(omitted entries are  $0$ )
\[ E_{16}^{+} =
  \begin{tabular}{ | c | c c c c c c r | }
  \hline
    $E_{16}^{+}$ & 1 & 2 & 3 & 4 & 5 & 8 & 16 \\
    \hline
    1  &    &    &   &   &   &  & 0 \\
    2  &    &    &   &   &    & 0 & 1/2  \\
    3  &  &  &   &  & 0 & 2/3  &  1/3 \\
    4  &    &    &   & 0   &    4/5   & 1/2   &  1/4 \\
    5  &  & & 0 &  4/5   &   16/25    &  2/5 &  1/5 \\
    8  &    & 0   &   2/3    &  1/2   &  2/5     & 1/4  & 1/8  \\
    16 & 0    & 1/2    &   1/3    &  1/4   &   1/5    &  1/8 & 1/16  \\
    \hline
  \end{tabular}
\]
}
\end{exa}

We can also define an upper triangular matrix $\widetilde{Z}_n$ by
$$
\widetilde{\mathcal{U}}_n = T_n \widetilde{Z}_n
$$
The matrix $\widetilde{Z}_n$ is lower triangular, and we can compare it
with $Z_n$. In comparing these two matrices, in the range $1 \le j \le i \le s$ we have
$$
-1 < (\widetilde{Z}_n - Z_n)_{i,j} <1,
$$
a result which follows from the structure of $T_n^{-1}$.

\begin{exa} \label{ex64}
{\rm 
For $n=16$ the  matrix $\widetilde{Z}_{n}$  is
(omitted entries are $0$)
\[ \widetilde{Z}_{16} =
  \begin{tabular}{ | c | c c c c c c r | }
  \hline
    $\widetilde{Z}_{16}$ & 1 & 2 & 3 & 4 & 5 & 8 & 16 \\
    \hline
    1  &  1 &   &  &    &  & &  \\
    2  & 1    &  1  &    &    &   &  &   \\
    3  &  6/5 & 3/5 &   16/15 & &  &   &   \\
    4  &   4/5 &  2/5   &   4/15 &   1   & &  &   \\
    5  &   4/3  &  2/3 &   4/9 &   1/3  &  16/15    &   &   \\
    8  &   8/3 &   4/3 &    8/9  &  2/3   &   8/15    &  1  &   \\
    16 &  8   &  4   &   8/3   &  2   &  8/5   & 1  &  1 \\
    \hline
  \end{tabular}
\]

}
\end{exa}

\begin{exa} \label{ex65}
{ \rm
For $n=16$ the  matrix $W_{n}:= \widetilde{Z}_{n}- Z_{n}$ f is
(omitted entries are $0$)
\[ W_{16}:= \widetilde{Z}_{16}- Z_{16}=
  \begin{tabular}{ | c | c c c c c c r | }
  \hline
    $W_{16}$ & 1 & 2 & 3 & 4 & 5 & 8 & 16 \\
    \hline
    1  &  0 &   &  &    &  & &  \\
    2  & 0    &  0  &    &    &   &  &   \\
    3  &  1/5 & 3/5 &  1/15 & &  &   &   \\
    4  &    -1/5 & -3/5   & 4/15 &   0   & &  &   \\
    5  &    1/3 &  2/3 &   4/9 &   1/3  &  1/15    &   &   \\
    8  &  -1/3 &  - 2/3 &   - 1/9  & - 1/3   &   8/15   &  0  &   \\
    16 &  0  &  0   &   - 1/3   &  0   &  -2/5  & 0  &  0 \\
    \hline
  \end{tabular}
\]
As remarked above, all  entries of $W_{n}$ lie strictly between $-1$ and $1$. The column sums of
$W_n$ are all nonnegative, since they can be shown to coincide with the first row of
$\widetilde{\mathcal{U}}_n - U_n$.
}
\end{exa}

\subsection{Continuous  deformations}\label{sec72}

To understand the structure of eigenvalues and eigenvectors of Cardinal's matrices
$\mathcal{U}_n$ one might try deforming the Cardinal matrix $\mathcal{U}_n$
in other fashions,  along a smooth path,
in which it remains skew upper triangular, and symmetric, 
with numbers close to $1$ on the diagonal.
These are homotopy paths and it might be interesting to use $\sT= \sT_{s(n)}$ 
as the base point. 

 One  can get to the matrix $\mathcal{U}_n$ starting from the matrix $\sT$
by making  non-negative increasing changes
in entries above the anti-diagonal, leaving zeros below the diagonal,
and only allowing such deformations  with all deformed matrices remaining
symmetric. This preserves the reality of the eigenvalues during the
deformation, which vary continuously. The eigenvalues of $T$ are all equal to $1$ and $-1$,
as equal as possible in number. They may mismatch in number by at most $1$, noting that
the trace of $T$ is $0$ or $1$ depending on whether $s$ is even or odd.

 Under a continuous  symmetric deformation, no eigenvalue can change sign,
because a symmetric matrix has real eigenvalues and eigenvalue $0$ cannot occur
because the anti-diagonal elements remain positive, so the sign of the determinant
stays fixed.

We  note that there are  about $s$ different 
values of $n$ giving Cardinal matrices $\mathcal{U}_n$ having the same size $s$,
because $s(n) $ jumps by $1$ only at values $n =k(k+1)$ or $k^2$. The Cardinal
algebras $\sA_n$ of these different $n$ are not the same in general. 
However the  number of such
matrices is much less than the total set of deformation parameters that maintain symmetry
of the matrix, which is about $\frac{1}{4} s^2$.
An interesting question is the structure of   deformations for which there is an underlying
rank $s$ commutative
algebra of lower triangular matrices (obtained by applying $\sT^{-1}$ that 
simultaneously is being deformed  along the
deformation path. Perhaps requiring such an  extra property would restrict the allowable
set of deformations to better match the number of sample matrices.

\section{ Concluding Remarks}\label{sec8}

The Riemann hypothesis can also be related to the
 the size of the smallest eigenvalue
(in absolute value) of $\mathcal{U}_n$. Its truth 
requires that this eigenvalue  not get too close to $0$. This property might be viewed as  a kind of level
repulsion phenomenon.
It might be useful to get more general information 
 about the eigenvalues of $\mathcal{U}_n$.
\begin{enumerate}
\item
How do the positive eigenvalues and (absolute values of )negative eigenvalues 
of $\mathcal{U}_n$ interact. Do they interlace?
\item
Does  interlacing of eigenvalues hold 
 when increasing
$n$ to $n+1$, or whether they change in a simple way.
Only a few entries in the matrix $\mathcal{U}_n$ change
in going from $n$ to $n+1$.
\item
 What happens to the eigenvalues at the
special values $n= k^2$ or $k(k+1)$  where the size of the Cardinal matrices increases by one.
One can break this jump in half by building additional  Cardinal matrices using
the ceiling function instead of the floor function. This adds another row
and column to the matrix and some interesting features emerge.
\end{enumerate}

The Cardinal algebra $\sA_n$  is a kind of ``finite-dimensional  quantization" 
of the algebra of Dirichlet series. it is  an interesting construction half way between
addition and multiplication. It has  added in a nice way  ``approximate divisors" which increase
the number of divisors of $n$ from $d(n)$ to  about $2 \sqrt{n}$. Recall that  
the number of divisors function  $d(n)=O(n^{\epsilon})$ for any $\epsilon >0$. The commutativity property of
the resulting algebra seems very important. The symmetry property of the matrices may
be a finite-dimensional vestige of    the symmetry
under $s \to 1-s$ given in the functional equation of the Riemann zeta function.
If that were so, then the matrix $T$ might be associated with the Euler factor
at the real place.

We now remark on various numerology connected with $n$ and $n+1$ together
that has appeared recently in several different number-theory  contexts.  
In a paper of the first author with Harsh Mehta \cite{LM15} we observed in a context of 
products of Farey fractions, but also potentially related to the Riemann hypothesis,  functions with jumps
that occur at a subset of these  values $k^2$ and $k(k+1)$,  which arise in part as an artifact of  
Dirichlet's hyperbola method.  In another direction, work on splitting of polynomials
of the first author with B. L. Weiss \cite{LagW15} led to the discovery by interpolation in a variable $z$ of
measures defined on each symmetric group $S_n$ 
 at $z=p$, a prime, to a signed measure  at  
on the symmetric group $S_n$  at the value $z=1$,
 which combines symmetric group representations from $S_n$ and $S_{n-1}$
in an interesting way, and has an internal  multiplicative structure respecting
integer multiplication on $n$.  
This is being explored further in  work in progress with Trevor Hyde.

Might there  exist a family of finite-dimensional quantum  integrable systems,
with parameter $n$ increasing to infinity, 
 with a (possibly nonlinear) difference operator as a Hamiltonian, acting on 
 a space of  dimension
higher than one, which can explain all these numerological coincidences?

\subsection*{Acknowledgments.}
This work began  in 2009, as part of an REU project
at the University of Michigan,  in
which the first author mentored the second author. 
The authors thank the University of Michigan
for support.

\end{document}